\documentclass[11pt,reqno]{amsart}

\usepackage{amsmath,amssymb,amsfonts,amsthm}
\usepackage{hyperref}
\usepackage{mathrsfs}
\usepackage{enumitem}
\usepackage{geometry}
\hypersetup{
	colorlinks=true,
	linkcolor=blue,
	citecolor=blue,
	urlcolor=blue
}

\newtheorem{theorem}{Theorem}[section]
\newtheorem{lemma}[theorem]{Lemma}
\newtheorem{proposition}[theorem]{Proposition}

\theoremstyle{definition}
\newtheorem{definition}[theorem]{Definition}

\theoremstyle{remark}
\newtheorem{remark}[theorem]{Remark}

\title{UNIFORM $L^\infty$ ESTIMATES FOR COMPLEX HESSIAN EQUATIONS ON COMPACT HERMITIAN MANIFOLDS}

\author{Truong Dinh Dat}

\address{Nguyen Trai University, Hanoi, Viet Nam}

\email{truongdinhdat14081994qb@gmail.com}

\begin{document}
	\begin{abstract}
		We develop a pluripotential approach to complex Hessian equations
		on compact Hermitian manifolds. In this setting, the lack of closedness
		of the background metric introduces torsion terms that prevent a direct
		extension of the Kähler theory.
		
		Our main result is a uniform $L^\infty$ estimate for bounded
		$\omega$-$m$-subharmonic solutions of the equation
		\[
		(\omega + dd^c u)^m \wedge \omega^{n-m} = cf\,\omega^n,
		\]
		under the assumption that $f \in L^p$, $f \ge 0$ for some $p>1$.
		The proof combines a weak comparison principle with torsion error,
		a capacity theory adapted to the Hermitian setting, and a nonlinear
		iteration scheme controlling the decay of sublevel sets.
		
		As applications, we obtain existence, stability and compactness results for weak
		solutions with $L^p$ densities.
		These results extend several aspects of the pluripotential theory
		of complex Hessian equations beyond the Kähler framework.
	\end{abstract}

	\keywords{Complex Hessian equations, Hermitian manifolds, torsion effects, capacity estimates, pluripotential theory}
	
	\subjclass[2020]{Primary 32W20; Secondary 32U05}

	\maketitle
	
	\section{Introduction}
	
	Complex fully nonlinear equations on complex manifolds lie at the
	intersection of pluripotential theory, complex geometry, and nonlinear PDE.
	Among them, the complex Monge--Ampère equation and its Hessian counterparts
	have been extensively studied in the Kähler setting
	(see e.g. \cite{Kolodziej98,Kolodziej05,Blocki05,DinewKolodziej}).
	A comprehensive pluripotential theory has been developed, yielding existence,
	stability, and a priori estimates for weak solutions under minimal assumptions
	on the density (see \cite{BedfordTaylor,Kolodziej98,GuedjZeriahi}).
	
	Let $(X,\omega)$ be a compact complex manifold of dimension $n$.
	Given an integer $1 \le m \le n$, we consider the complex Hessian equation
	\begin{equation}\label{eq:main-intro}
		(\omega + dd^c u)^m \wedge \omega^{n-m} = cf\,\omega^n,
	\end{equation}
	where $f \ge 0$ is a density satisfying a natural normalization condition.
	
	In the Kähler case, weak solutions and a priori estimates have been studied
	using both pluripotential and PDE methods. In particular, second order
	estimates in the smooth setting were obtained by Hou--Ma--Wu~\cite{HouMaWu}
	and Guan--Li~\cite{GuanLi}, while the pluripotential approach was further
	developed in \cite{DinewKolodziej,LuNguyen}. Stability and uniqueness
	properties of weak solutions have also been extensively investigated
	(see \cite{Dinew,EyssidieuxGuedjZeriahi}).
	
	A fundamental ingredient in this theory is the uniform $L^\infty$ estimate
	for solutions with right-hand side in $L^p$, $p>1$, which ultimately relies
	on the interaction between comparison principles and capacity estimates
	(cf. \cite{Kolodziej98,Kolodziej05}). These tools, however, depend crucially
	on the identity $d\omega = 0$.To the best of our knowledge, even in the Hermitian setting,
	no uniform $L^\infty$ estimate for complex Hessian equations
	with merely $L^p$ right-hand side was previously available.
	
	\medskip
	
	In this paper, we investigate equation \eqref{eq:main-intro} in the
	Hermitian setting, where $\omega$ is no longer assumed to be closed.
	This seemingly mild generalization introduces substantial analytical
	difficulties. Indeed, the lack of closedness leads to the failure of the
	standard integration by parts mechanism, and mixed Hessian forms are no
	longer closed currents. As a consequence, classical comparison principles
	and capacity techniques cannot be applied directly.
	
	Despite significant progress for the complex Monge--Ampère equation
	on Hermitian manifolds (see \cite{TosattiWeinkove,G.Székelyhidi}),
	the pluripotential theory for complex Hessian equations in this setting
	remains largely incomplete. In particular, uniform $C^0$ estimates under
	weak assumptions on the density are not available in general.
	
	\medskip
	
	The main goal of this work is to establish a pluripotential framework
	for complex Hessian equations on compact Hermitian manifolds,
	leading to uniform estimates and existence results for weak solutions.
	
	Our first main result is the following a priori estimate.
	
	\begin{theorem}
		Let $(X,\omega)$ be a compact Hermitian manifold and let
		$u \in SH_m(X,\omega)\cap L^\infty(X)$ be a solution of
		\eqref{eq:main-intro}, normalized by $\sup_X u = 0$.
		Assume that $f \in L^p(X)$, $f \ge 0$ for some $p>1$ and
		$\int_X f\,\omega^n = \int_X \omega^n$.
		Then there exists a constant $C>0$, depending only on $(X,\omega)$,
		$p$, and $\|f\|_{L^p}$, such that
		\[
		\|u\|_{L^\infty(X)} \le C.
		\]
	\end{theorem}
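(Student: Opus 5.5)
We follow Kołodziej's capacity method, adapted to the Hermitian setting as in the work of Kołodziej--Nguyen for the Monge--Ampère case and Dinew--Kołodziej for Hessian equations. Define the Hessian capacity
\[
\mathrm{cap}_m(E)=\sup\Big\{\int_E (\omega+dd^c v)^m\wedge\omega^{n-m} : v\in SH_m(X,\omega),\ 0\le v\le 1\Big\},
\]
and set $a(s)=\mathrm{cap}_m(\{u<-s\})$ for $s>0$. The proof has three steps, given below in order.

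\textbf{Step 1: volume--capacity inequality.} We show that for some $\tau>1$ (any $\tau<n/(n-m)$ suffices),
\[
\mathrm{Vol}_\omega(E)\le C\,\mathrm{cap}_m(E)^{\tau}.
\]
This is local. On coordinate balls $\omega$ is comparable to the Euclidean form, so an $(\omega,m)$-subharmonic function differs from a Euclidean $m$-subharmonic one by a bounded quasi-psh correction. We then invoke the local integrability of $m$-subharmonic functions in $L^q$ for $q<n/(n-m)$, following Dinew--Kołodziej. Combining this with Hölder's inequality and $f\in L^p$ gives
\[
\int_E f\,\omega^n\le C\|f\|_{L^p}\,\mathrm{cap}_m(E)^{\tau/p^*}.
\]
We choose $\tau$ so that $\tau/p^*>1$, which needs $p^*<n/(n-m)$; this holds for large $p$. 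For general $p>1$ we replace the power by a rate $\mathrm{cap}^{1}h(\mathrm{cap}^{-1/m})$ with an admissible $h$, as in Kołodziej.

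\textbf{Step 2: comparison principle with torsion error.} Here $d\omega\ne0$ spoils the Bedford--Taylor comparison, and we expect this step to be the main obstacle. We aim to prove the following. For $u,v\in SH_m(X,\omega)\cap L^\infty$ with $-1\le v\le 0$, any $0<s<1$, and $0<t\le s$ small enough depending on $\omega$ (say $t\le \varepsilon_0 s^{n}$, as in Kołodziej--Nguyen),
\[
t^m\int_{\{u<\sup(u-\ldots)\}}(\omega+dd^cv)^m\wedge\omega^{n-m}\le (1+Cs)\int_{\{u<tv-S(s)\}}(\omega+dd^cu)^m\wedge\omega^{n-m},
\]
with the sets chosen as in that work. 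The argument runs as follows. First apply the modified comparison on $\{u<\phi\}$ with $\phi=(1-t)\cdot$ shift $+\,tv$. Integrating by parts produces error terms of the form $\int dd^c\omega\wedge\ldots$ and $\int d\omega\wedge d^c\omega\wedge\ldots$, controlled by $B\omega^2$ and $B\omega^3$. Then absorb these errors by shrinking $t$ relative to $s$, using the positivity of the elementary symmetric cones. The delicate point is that mixed forms $(\omega+dd^cu)^k\wedge(\omega+dd^cv)^{m-k}\wedge\omega^{n-m}$ must stay positive and dominate $\omega^{m}\wedge\omega^{n-m}$ terms. For this we use Gårding's inequality in $\Gamma_m$ together with $\omega^{n-m}$ being strictly positive.

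\textbf{Step 3: iteration.} Combining Steps 1 and 2 gives, for $0<t<1$,
\[
t^m a(s+t)\le C\,a(s)^{1+\delta}
\]
for $s\ge s_0$, possibly with an additive torsion term absorbed via the $t\le\varepsilon s^n$ restriction. De Giorgi's lemma then shows $a(s)=0$ for $s\ge s_0+C'$. The starting point $s_0$ is fixed by making $a(s_0)$ small. For this we use a uniform $L^1$ bound $\int|u|\omega^n\le C$ for $\sup u=0$, which follows from a Hartogs-type compactness of normalized $SH_m(X,\omega)$ on Hermitian manifolds, via local sub-mean value. We also use $\mathrm{cap}_m(\{u<-s\})\le C/s$, proved by the Hermitian Chern--Levine--Nirenberg estimate with torsion corrections. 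Since $\{u<-s\}$ having zero capacity forces it to be empty for $m$-sh $u$, we conclude $u\ge -s_0-C'$. The normalization $\int f\omega^n=\int\omega^n$ enters in bounding $c$ above and below. The lower bound uses the mixed Monge--Ampère/Hessian inequality at a maximum point of an auxiliary function; since $c$ multiplies $f$, it appears in the constant of Step 1.
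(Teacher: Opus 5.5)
The gap is in Step~1, and it cannot be closed. Your own computation shows that $\int_E f\,\omega^n\le C\|f\|_{L^p}\,\mathrm{cap}_m(E)^{\tau/p^*}$, with $\tau<n/(n-m)$, has exponent $>1$ iff $p^*<n/(n-m)$, i.e.\ iff $p>n/m$.

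The fallback (``for general $p>1$ replace the power by $\mathrm{cap}\cdot h(\mathrm{cap}^{-1/m})$ with an admissible $h$, as in Ko{\l}odziej'') works only when $m=n$. There the volume--capacity inequality is exponential, $\mathrm{Vol}(E)\le C\exp(-c\,\mathrm{cap}(E)^{-1/n})$. For $m<n$ the polynomial rate is sharp. One has $\mathrm{cap}_m(B_r)\approx r^{2(n-m)}$, while a density with $\|f\|_{L^p}\approx 1$ concentrated on $B_r$ carries mass $\approx r^{2n/p^*}$ there. So $\int_E f\,\omega^n/\mathrm{cap}_m(E)$ does not even tend to $0$ along small balls unless $p>n/m$, and no admissible $h$ exists.

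In fact the statement is false for $m<n$ and $1<p<n/m$. Work in a chart around a point $x_0$ where $\omega(x_0)$ is Euclidean. Take $0<\gamma<(n-m)/m$ and $u_\delta=-(|z|^2+\delta)^{-\gamma}$.
\begin{itemize}
\item The eigenvalues of $dd^c u_\delta$ are $\gamma(|z|^2+\delta)^{-\gamma-1}(1,\dots,1,\theta)$ with $\theta\in[-\gamma,1]$. After normalization they lie in a fixed compact part of the open cone $\Gamma_m$.
\item Their $\sigma_m$ is $\le C|z|^{-2m(1+\gamma)}$ uniformly in $\delta$. This lies in $L^p$ for $p<n/(m(1+\gamma))$.
\item Take a cutoff $\rho$ with small support and $\varepsilon>0$ small. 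Then $\varepsilon\rho u_\delta$ is smooth, $\omega$-$m$-subharmonic, and has $\sup=0$. It solves the equation with $c_\delta$ bounded above and below and $\|f_\delta\|_{L^p}$ bounded uniformly in $\delta$.
\item Yet its minimum is $-\varepsilon\delta^{-\gamma}\to-\infty$.
\end{itemize}
Your scheme is the Ko{\l}odziej--Nguyen one, and Steps~2--3, though only sketched, are standard. What it actually proves is the estimate for $p>n/m$, and for all $p>1$ only when $m=n$.

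The paper does not cover this range either. Its route differs from yours:
\begin{itemize}
\item it uses an additive torsion error $C\int_{\{u<v\}}(v-u)\,\omega^n$ instead of your multiplicative $(1+Cs)$ error with $t\le\varepsilon_0 s^n$;
\item it iterates on $A(t)=\mathrm{Vol}(\{u<-t\})$ instead of running a De Giorgi lemma on the capacity.
\end{itemize}
But it reaches all $p>1$ only through two faulty steps.
\begin{itemize}
\item Step~4 of the proof of Proposition~\ref{prop:cap-vol-final} passes from $t\le C/\mathrm{Cap}_m^\omega(E)$ and $\mathrm{Vol}(E)\le Ct^{-q}$ to $\mathrm{Vol}(E)\le C\,\mathrm{Cap}_m^\omega(E)^q$. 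This is the wrong direction: the first inequality bounds $t^{-q}$ from below, not above.
\item Lemma~\ref{lem:iteration-final} can only produce $A(t_{k+1})\le\tfrac12A(t_k)$ if $\alpha(1-1/p)>1$. Since small balls force $\alpha\le n/(n-m)$, this is again $p>n/m$.
\end{itemize}

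One point where your write-up is better than the paper's: you notice that $c$ must be controlled, whereas the paper absorbs it silently. What is needed is the \emph{upper} bound on $c$. It follows by integrating Maclaurin's inequality $\omega_u\wedge\omega^{n-1}\ge(cf)^{1/m}\omega^n$ and using $\int_X dd^cu\wedge\omega^{n-1}=\int_X u\,dd^c\omega^{n-1}$ with the uniform $L^1$ bound on $u$. One combines this with a lower bound on $\int_X f^{1/m}\omega^n$, which comes from $\int_X f\,\omega^n=\int_X\omega^n$ and $\|f\|_{L^p}$ by interpolation.
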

	
	\medskip
	
	The proof of Theorem~A requires a substantial refinement of the
	pluripotential approach in order to accommodate torsion effects.
	Our strategy relies on three main ingredients.
	
	\medskip
	
	\noindent
	\textbf{(1) Weak comparison with torsion error.}
	We establish a comparison principle adapted to the Hermitian setting,
	in which the classical inequality is replaced by an estimate involving
	an additional error term of the form
	\[
	\int_X (v-u)\,\omega^n.
	\]
	This term arises from the non-vanishing of $d\omega$ and reflects
	the contribution of torsion in integration by parts.
	A key point is that this error can be quantitatively controlled
	and does not destroy the underlying monotonicity structure.
	
	\medskip
	
	\noindent
	\textbf{(2) Capacity theory in the Hermitian setting.}
	We introduce an $m$-capacity adapted to the Hermitian geometry and
	develop its basic properties.
	In particular, we prove a capacity--volume inequality of the form
	\[
	\mathrm{Vol}(E)
	\le C \big( \mathrm{Cap}_m^\omega(E) \big)^\alpha,
	\]
	which extends classical estimates from the Kähler case.
	The proof combines a global extremal function construction with
	local arguments and requires a careful treatment of torsion terms.
	
	\medskip
	
	\noindent
	\textbf{(3) Nonlinear iteration.}
	Combining the previous ingredients with a truncation procedure,
	we derive a nonlinear recursive inequality for the volume of sublevel
	sets of $u$. A  Ko{\l}odziej iteration then yields the desired
	$L^\infty$ bound.
	
	\medskip
	
	As a consequence of the $C^0$ estimate, we obtain existence of weak
	solutions by approximation.
	
	\begin{theorem}
		Let $f \in L^p(X)$ with $p>1$, $f \ge 0$, and
		$\int_X f\,\omega^n = \int_X \omega^n$.
		Then there exists a solution
		\[
		u \in SH_m(X,\omega)\cap L^\infty(X)
		\]
		to \eqref{eq:main-intro}.
	\end{theorem}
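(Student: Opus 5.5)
We prove Theorem B by smooth approximation, using the uniform estimate of Theorem A and a stability/compactness argument for bounded $\omega$-$m$-subharmonic functions.

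\emph{Step 1: regularize the density.} Choose smooth positive densities $f_j\to f$ in $L^p(X)$, normalized so that $\int_X f_j\,\omega^n=\int_X\omega^n$. For instance, take $f_j=(\rho_j * f+1/j)$ after renormalization, with $\rho_j$ a partition-of-unity convolution. Then $\|f_j\|_{L^p}\le 2\|f\|_{L^p}+C$ uniformly in $j$.

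\emph{Step 2: solve the smooth problem.} For each $j$, solve the smooth Hessian equation
\[(\omega+dd^c u_j)^m\wedge\omega^{n-m}=c_j f_j\,\omega^n,\qquad \sup_X u_j=0,\]
with $u_j$ smooth and $\omega$-$m$-subharmonic and $c_j>0$. On Hermitian manifolds the constant $c_j$ must be allowed to vary, as in Tosatti--Weinkove and Székelyhidi. I would invoke the continuity method with the a priori estimates of Hou--Ma--Wu and Guan--Li type, as extended to the Hermitian case by Székelyhidi. The $C^0$ input is supplied by Theorem A.

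Next, bound $c_j$ uniformly above and below. Compare with $u\equiv0$ at the maximum and minimum points of $u_j$ (the maximum principle). At the minimum point of $u_j$ we have $dd^c u_j\ge0$, so $c_jf_j\ge1$ there, giving $c_j\ge 1/\sup f_j$. This bound is not uniform, so a better argument is needed. I would instead use the mixed-type inequality $(\omega+dd^cu_j)^m\wedge\omega^{n-m}\ge (c_jf_j)\omega^n$ together with the Gårding inequality. Integrating, and controlling $\int dd^c u_j\wedge\omega^{n-1}$ by the torsion estimate $|\int dd^cu_j\wedge\omega^{n-1}|\le C\|u_j\|_{L^1}$, bounds $c_j$ above, since $\|u_j\|_{L^1}$ is uniformly bounded for normalized $\omega$-$m$-subharmonic functions. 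A lower bound on $c_j$ comes from Theorem A's method: the sublevel set iteration fails if $c_j\to0$, because the comparison principle with torsion error then forces $u_j$ to have small Hessian mass, contradicting the mass lower bound $\int_X(\omega+dd^cu_j)^m\wedge\omega^{n-m}\ge \int\omega^n - C\|u_j\|_{L^1}$ after a torsion-corrected integration by parts. Passing to a subsequence, $c_j\to c\in(0,\infty)$.

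\emph{Step 3: pass to the limit.} Theorem A gives $\|u_j\|_{L^\infty}\le C$ uniformly, since its constant depends only on $\|f_j\|_{L^p}$ and $c_j$ is bounded. Compactness of normalized $\omega$-$m$-subharmonic functions in $L^1$ yields a subsequence with $u_j\to u$ in $L^1$. We have $u\in SH_m(X,\omega)$ with $\|u\|_\infty\le C$; replacing $u$ by its upper regularization if needed, $\sup u=0$ by Hartogs.

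The main obstacle is convergence of the Hessian measures, which requires convergence in capacity. I would prove that $(u_j)$ is Cauchy in $L^\infty$ via a stability estimate:
\[\|u_j-u_k\|_\infty\le C\big(\|c_jf_j-c_kf_k\|_{L^p}^{\gamma}+\|u_j-u_k\|_{L^1}^{\gamma}\big).\]
This estimate is obtained by rerunning the iteration of Theorem A on the sets $\{u_j<u_k-s\}$. The comparison principle with torsion error produces exactly the extra $\int(u_k-u_j)\omega^n$ term, which becomes the $L^1$ term above, and the capacity--volume inequality closes the recursion. Uniform convergence then allows us to pass to the limit in $(\omega+dd^cu_j)^m\wedge\omega^{n-m}$ by Bedford--Taylor-type continuity under uniform convergence, which holds locally where $\omega$ is a smooth form. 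Hence $u$ solves \eqref{eq:main-intro} with density $cf$.
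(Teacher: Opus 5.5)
Your route is the paper's: regularize $f$, solve the smooth problems, bound $u_j$ by Theorem A, and extract a limit. Two of your refinements are actually needed there. First, the constant must be allowed to depend on $j$ when $d\omega\neq0$; the paper's proof keeps $c$ fixed. Second, $(u_j)$ is not monotone and $L^1$ convergence does not pass to the limit in $H_m$, so the paper's appeal to decreasing limits has to be replaced by something like your $L^1$--$L^\infty$ stability estimate. Your G\aa rding upper bound on $c_j$ is also fine, once you note that $\int_X f_j^{1/m}\omega^n\to\int_X f^{1/m}\omega^n>0$.

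\textbf{The lower bound on $c_j$ fails.} Small $c_j$ only shrinks the right-hand side of the sublevel recursion, so nothing in the iteration breaks down. The inequality $\int_X H_m(u_j)\ge\int_X\omega^n-C\|u_j\|_{L^1}$ gives no positive bound, because nothing makes $C\|u_j\|_{L^1}$ smaller than $\mathrm{Vol}(X)$. Besides, for $m\ge2$ the torsion terms coming from $\int_X(dd^cu_j)^k\wedge\omega^{n-k}$, $k\ge2$, are controlled by $\|u_j\|_{L^\infty}$, not by $\|u_j\|_{L^1}$. A uniform positive lower bound on Hessian masses is delicate on Hermitian manifolds, already for $m=n$.

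You do not need one. Argue by contradiction with your own Step 3, whose constants depend only on an upper bound for $\|c_jf_j\|_{L^p}$. If $c_j\to0$ along a subsequence, then after a further subsequence $u_j\to u$ uniformly. Apply Lemma~\ref{lem:weak-comp-final} to $u_j$ and the constant $v\equiv\inf_Xu_j+s$, so that $H_m(v)=\omega^n$ and $0<v-u_j\le s$ on $\{u_j<v\}$. For $s\le 1/(2C)$ this gives $\mathrm{Vol}(\{u_j<\inf_Xu_j+s\})\le 2c_j\int_Xf_j\,\omega^n=2c_j\mathrm{Vol}(X)\to0$. But once $\|u_j-u\|_{L^\infty}<s/4$, this set contains $\{u<\inf_Xu+s/2\}$. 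That set is nonempty and open, since $u$ is upper semicontinuous, so it has positive volume, which is a contradiction.

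\textbf{A second problem is shared with the paper.} Your uniform bound (Theorem A) and your Step 3 both rest on the sublevel recursion closing, which requires $\alpha(1-1/p)>1$. For $m<n$ this forces $p>n/m$. Already on a flat torus a ball of radius $r$ has volume $\sim r^{2n}$ and $m$-capacity $\sim r^{2(n-m)}$, so the volume--capacity exponent is at most $n/(n-m)$.

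The threshold is sharp. On a flat torus, take cut-offs of $-\delta(|z|^2+\varepsilon)^{-b}$ with $0<b<(n-m)/m$ and $\delta>0$ small. These are smooth $\omega$-$m$-subharmonic functions whose Hessian densities are automatically normalized, $\omega$ being K\"ahler. The densities stay bounded in $L^p$ for every $p<n/(m(1+b))$, while the $L^\infty$ norms of the functions blow up as $\varepsilon\to0$.

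Thus for $m<n$ and $1<p<n/m$ Theorem A is false, and neither your argument nor the paper's can go through. Under the hypothesis $p>n/m$, your scheme with the fix above is the standard proof; this is the range treated by Dinew--Ko\l odziej in the K\"ahler case and by Ko\l odziej--Nguyen in the Hermitian case.
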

	
	\medskip
	
	We further establish stability properties under perturbations of the
	density, showing that the solution operator is continuous in suitable
	topologies. This provides additional evidence that the Hermitian
	pluripotential framework developed here is robust.
	
	\medskip
	
	\noindent
	\textbf{Novelty and scope.}
	The main novelty of this paper lies in the systematic treatment of
	torsion effects in the context of complex Hessian equations.
	While similar phenomena have been observed for the Monge--Ampère equation,
	their impact on higher-order Hessian operators is significantly more subtle.
	Our results show that, despite the lack of closedness of $\omega$,
	one can still recover a workable pluripotential theory by introducing
	appropriate error terms and adapting capacity techniques.
	
	\medskip
	
	\noindent
	\textbf{Organization of the paper.}
	Section~2 contains preliminaries on $m$-subharmonic functions and
	Hessian measures in the Hermitian setting.
	In Section~3 we prove the weak comparison principle with torsion error.
	Section~4 is devoted to capacity estimates and the capacity--volume
	inequality.
	In Section~5 we establish the $C^0$ estimate (Theorem~1.1).
	Finally, Section~6 contains existence and stability, compactness results.


	\section{Preliminaries}
	
	In this section, we collect basic notions and tools concerning
	complex Hessian equations on compact Hermitian manifolds.
	
	\subsection{Hermitian manifolds}
	
	Let $(X,\omega)$ be a compact complex manifold of dimension $n$,
	equipped with a Hermitian metric $\omega$, i.e.
	a smooth positive $(1,1)$-form.
	
	In contrast to the Kähler case, we do not assume that $\omega$ is closed.
	Thus,
	\[
	d\omega \neq 0,
	\]
	and torsion terms will appear in integration by parts.
	
	We denote by $\omega^n$ the associated volume form and by
	\[
	\mathrm{Vol}(E) := \int_E \omega^n
	\]
	the volume of a Borel set $E \subset X$.
	
	Throughout the paper, constants $C>0$ may vary from line to line,
	but depend only on $(X,\omega)$ and fixed parameters.
	
	\subsection{$m$-subharmonic functions}
	
	Fix an integer $1 \le m \le n$.
	
	\begin{definition}
		A function $u \in L^1(X)$ is called $\omega$-$m$-subharmonic
		(briefly $u \in SH_m(X,\omega)$) if:
		\begin{itemize}
			\item $u$ is upper semicontinuous,
			\item for any local chart, the current
			\[
			\omega + dd^c u
			\]
			is $m$-positive in the sense that
			\[
			(\omega + dd^c u)^k \wedge \omega^{n-k} \ge 0
			\quad \text{for all } k=1,\dots,m.
			\]
		\end{itemize}
	\end{definition}
	
	This notion generalizes both plurisubharmonic functions ($m=n$)
	and subharmonic functions ($m=1$). This notion was introduced and systematically studied in
	\cite{Blocki05,DinewKolodziej}.
	
	\subsection{Complex Hessian operator}
	
	For bounded $\omega$-$m$-subharmonic functions,
	the complex Hessian operator is well-defined by approximation.
	
	\begin{proposition}
		Let $u_1,\dots,u_m \in SH_m(X,\omega)\cap L^\infty(X)$.
		Then the mixed Hessian current
		\[
		(\omega + dd^c u_1)\wedge \cdots \wedge (\omega + dd^c u_m)
		\wedge \omega^{n-m}
		\]
		is a well-defined positive measure.
	\end{proposition}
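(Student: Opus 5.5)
The plan is to define the mixed Hessian current inductively, in the Bedford--Taylor manner, and to get positivity and well-definedness from approximation by smooth functions together with the continuity of wedge products along decreasing sequences. The question is local. Cover $X$ by coordinate balls $B$ on which $\omega \le A\, dd^c|z|^2$. On such a ball, set $v_j := u_j + A|z|^2$; the form $dd^c v_j = \omega + dd^c u_j + (A\,dd^c|z|^2-\omega)$ differs from $\omega+dd^c u_j$ by a smooth form. Expanding the product therefore reduces the problem to defining currents of the form
\[
dd^c w_1\wedge\cdots\wedge dd^c w_k\wedge \beta ,
\]
where $\beta$ is a smooth $(n-k,n-k)$-form built from $\omega$ and $dd^c|z|^2$, and the $w_i$ are bounded functions.

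The basic step is the inductive definition. Suppose $T=dd^c w_1\wedge\cdots\wedge dd^c w_{k-1}\wedge\beta'$ has already been defined as a current of order zero, and that $w_k$ is bounded. Set
\[
dd^c w_k\wedge T := dd^c(w_k T) - dw_k\wedge d^c T - d^c w_k\wedge dT + w_k\, dd^c T .
\]
This formula differs from the Kähler case: the extra terms appear because $d\beta'\neq 0$. To make sense of them, I would use the Chern--Levine--Nirenberg type bounds for the derivatives of smooth forms. For smooth $w_i$, integration by parts gives uniform mass bounds
\[
\|T\|_K \le C_K \prod_i \|w_i\|_{L^\infty},
\]
and these bounds include the torsion terms $d\omega$ and $dd^c\omega$, which are smooth and hence bounded. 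Terms such as $dw_k\wedge T$ must be controlled through $\int dw_k\wedge d^c w_k\wedge T\le C$. This holds because $w_k$ is bounded and $m$-subharmonic up to a smooth perturbation.

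Next comes convergence. I would take smooth $\omega$-$m$-subharmonic approximants $u_j^\epsilon \searrow u_j$. On a Hermitian manifold these exist locally by convolution, and the small loss in positivity can be absorbed by adding $\epsilon|z|^2$-type corrections (or by the global regularization results of Błocki/Dinew--Kołodziej adapted to this setting). I would then show that the smooth currents $(\omega+dd^c u_1^\epsilon)\wedge\cdots\wedge\omega^{n-m}$ converge weakly, and that the limit is independent of the choice of approximants. The argument follows Bedford--Taylor's monotone convergence theorem, inducting on the number of factors, and uses the quasi-continuity of $m$-subharmonic functions with respect to a local $m$-capacity. Positivity passes to the limit. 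This is because, for smooth functions, $m$-positivity of each $\omega+dd^c u_j^\epsilon$ implies, by Gårding's inequality for the cone $\Gamma_m$, that the mixed product with $\omega^{n-m}$ is a nonnegative volume form, and weak limits of positive measures are positive. The measures defined on overlapping charts agree because they are limits of the same global smooth objects, so they patch together into a measure on $X$.

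I expect the main obstacle to be the monotone convergence step when $d\omega\neq0$. The cross terms $dw\wedge d^c\omega\wedge\cdots$ are not positive, so they must be shown to converge using the uniform energy bounds $\int d(u^\epsilon-u)\wedge d^c(u^\epsilon-u)\wedge\cdots\to0$, obtained by quasi-continuity plus CLN. I would first try to dominate these cross terms by the Cauchy--Schwarz inequality against the positive currents $dw\wedge d^cw\wedge\omega^{n-2}\wedge\cdots$, and then check that this domination is compatible with $m$-positivity, where only $\omega^{n-m}$-wedges are positive.
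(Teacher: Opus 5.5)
The paper gives no argument for this proposition. It only points to the Bedford--Taylor construction and its flat or K\"ahler Hessian versions (Bedford--Taylor, B{\l}ocki, Dinew--Ko{\l}odziej), which rely on a constant reference form or on $d\omega=0$. Your outline follows that Bedford--Taylor route, so it has to supply the Hermitian ingredients itself. For $2\le m\le n-1$ two of them fail as written. For $m=n$ your reduction is fine, since $u_j+A|z|^2$ is psh and no torsion analysis is needed; for $m=1$ there is nothing to prove.

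\textbf{Regularization.} $(\omega,m)$-positivity at $x$ refers to the cone $\Gamma_m(\omega(x))$, which moves with $x$ and is scale invariant.
\begin{itemize}
\item Convolution averages forms $\omega(y)+dd^cu(y)\in\Gamma_m(\omega(y))$ with $|y-x|<\epsilon$. Such a form can lie at distance of order $\epsilon\,|dd^cu(y)|$ from $\Gamma_m(\omega(x))$.
\item Bounded $(\omega,m)$-subharmonic functions can have $dd^cu$ unbounded in directions near $\partial\Gamma_m$ (superpose truncated, rescaled copies of $-|z|^{2-2n/m}$). So the defect is not $O(\epsilon)$, and neither $\epsilon|z|^2$ nor $A|z|^2$ with $A$ fixed restores positivity.
\item For the same reason $u_j+A|z|^2$ is not $m$-subharmonic for $dd^c|z|^2$, so nothing from the flat theory carries over through your local reduction.
\item The global regularization theorems you mention are K\"ahler results.
\end{itemize}
A decreasing smooth approximation of a merely bounded $(\omega,m)$-subharmonic function is something you would have to prove.

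\textbf{The inductive definition.} The torsion terms are smooth, as you say; the trouble is that they destroy the factor $\omega^{n-m}$ on which positivity rests.
\begin{itemize}
\item Write $\omega_{u_i}=\omega+dd^cu_i$. The $m$-positivity of $\omega_{u_1}\wedge\cdots\wedge\omega_{u_k}$ is only tested against forms $\gamma_{k+1}\wedge\cdots\wedge\gamma_m\wedge\omega^{n-m}$ with $\gamma_i\in\Gamma_m(\omega)$.
\item These forms span $\omega^{n-m}\wedge\Lambda^{m-k,m-k}$. By the Lefschetz decomposition this is a proper subspace of the $(n-k,n-k)$-forms once $k>m/2$.
\item Now $dd^c(\omega^{n-m})$ produces currents such as $\omega_{u_1}\wedge\cdots\wedge\omega_{u_{m-1}}\wedge\omega^{n-m-1}\wedge dd^c\omega$. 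When $m\ge3$, the primitive part of $dd^c\omega$ generally puts these outside that span.
\item Hence the mass bound $\|T\|_K\le C_K\prod_i\|w_i\|_{L^\infty}$ ``by integration by parts'' is unsupported. Without positivity it is false already for $k=1$: an oscillating $w$ with $\|w\|_\infty\le\epsilon$ can have $dd^cw$ of arbitrarily large mass.
\end{itemize}
So your induction hypothesis, that $T$, $dT$, $d^cT$ and $dd^cT$ are of order zero with uniform mass, has no basis.

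What is needed is a nonlinear eigenvalue domination on $\Gamma_m$. An example is
\[
|\omega_u^k\wedge\omega^{n-m-1}\wedge\Xi|\le C|\Xi|\,\omega_u^k\wedge\omega^{n-k}\quad\text{for } k\le m-1 .
\]
It follows, with eigenvalues ordered $\lambda_1\ge\cdots\ge\lambda_n$, from $|\lambda_i|\le(n-m)\lambda_m$ for $i>m$ and from $\sigma_k(\lambda)>\lambda_1\cdots\lambda_k$ on $\Gamma_m$. This estimate must then be:
\begin{itemize}
\item extended to mixed products;
\item extended to the Cauchy--Schwarz step for $du_k\wedge d^cT$, whose positivity must come from G{\aa}rding rather than from positivity of $T$;
\item carried through the decreasing limit by quasi-continuity with respect to a capacity built from this same operator.
\end{itemize}
That convergence argument, which you leave at ``I would first try'', is the whole content of the proposition in the Hermitian case with $2\le m\le n-1$. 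As it stands, the proposal does not prove it.
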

	
	In particular, for $u \in SH_m(X,\omega)\cap L^\infty(X)$,
	we define the Hessian measure
	\[
	H_m(u) := (\omega + dd^c u)^m \wedge \omega^{n-m}.
	\]
	This construction follows the pluripotential approach initiated in
	\cite{BedfordTaylor} and extended to Hessian equations in
	\cite{Blocki05,DinewKolodziej}.	
	\subsection{Basic properties}
	
	We recall several standard properties which will be used later.
	
	\begin{itemize}
		\item \textbf{Monotonicity.}  
		If $u \le v$, then
		\[
		\mathbf{1}_{\{u<v\}} H_m(v)
		\le \mathbf{1}_{\{u<v\}} H_m(u)
		\quad \text{(up to torsion error in the Hermitian case)}.
		\]
		
		\item \textbf{Locality.}  
		If $u=v$ on an open set $U$, then
		\[
		H_m(u) = H_m(v) \quad \text{on } U.
		\]
		
		\item \textbf{Continuity under decreasing sequences.}  
		If $u_j \searrow u$, then
		\[
		H_m(u_j) \rightharpoonup H_m(u)
		\]
		weakly as measures.
	\end{itemize}
	
	\subsection{Capacity}
	This notion is inspired by the capacity introduced by Bedford and Taylor
	\cite{BedfordTaylor} and further developed in the context of complex
	Monge--Ampère equations in \cite{Kolodziej98,GuedjZeriahi}.	
	We introduce the Hermitian $m$-capacity.
	
	\begin{definition}
		For a Borel set $E \subset X$, define
		\[
		\mathrm{Cap}_m^\omega(E)
		:=
		\sup \left\{
		\int_E (\omega+dd^c \varphi)^m \wedge \omega^{n-m}
		:\ \varphi \in SH_m(X,\omega),\ -1 \le \varphi \le 0
		\right\}.
		\]
	\end{definition}
	
	\begin{proposition}
		The capacity $\mathrm{Cap}_m^\omega$ satisfies:
		\begin{itemize}
			\item monotonicity: if $E \subset F$, then
			\[
			\mathrm{Cap}_m^\omega(E) \le \mathrm{Cap}_m^\omega(F),
			\]
			\item subadditivity,
			\item outer regularity.
		\end{itemize}
	\end{proposition}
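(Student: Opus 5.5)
Monotonicity and subadditivity will come straight from the definition, because for each fixed admissible $\varphi$ the map $E\mapsto \int_E H_m(\varphi)$ is a positive Borel measure; outer regularity needs an approximation argument.

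\textbf{Monotonicity.} Fix $\varphi\in SH_m(X,\omega)$ with $-1\le\varphi\le 0$. By the proposition on mixed Hessian currents, $H_m(\varphi)=(\omega+dd^c\varphi)^m\wedge\omega^{n-m}$ is a positive Radon measure. So $E\subset F$ gives $\int_E H_m(\varphi)\le\int_F H_m(\varphi)\le \mathrm{Cap}_m^\omega(F)$. Taking the supremum over $\varphi$ yields $\mathrm{Cap}_m^\omega(E)\le\mathrm{Cap}_m^\omega(F)$.

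\textbf{Countable subadditivity.} Let $E=\bigcup_j E_j$. For each admissible $\varphi$, $\sigma$-subadditivity of the measure $H_m(\varphi)$ gives
\[
\int_E H_m(\varphi)\le \sum_j \int_{E_j} H_m(\varphi)\le \sum_j \mathrm{Cap}_m^\omega(E_j).
\]
Taking the supremum over $\varphi$ gives $\mathrm{Cap}_m^\omega(E)\le\sum_j\mathrm{Cap}_m^\omega(E_j)$. No torsion issue arises here, since only positivity of $H_m(\varphi)$ is used. I also note for later use that the capacity is finite. By Stokes, $\int_X H_m(\varphi)=\int_X\omega^m\wedge\omega^{n-m}$ plus terms of the form $\int_X \varphi\, dd^c(\cdots)$ involving $d\omega$ and $dd^c\omega$. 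Since $|\varphi|\le1$, these are bounded by a constant $C(X,\omega)$; this bound would first be proved for smooth $\varphi$ and then passed to the limit.

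\textbf{Outer regularity.} The claim is $\mathrm{Cap}_m^\omega(E)=\inf\{\mathrm{Cap}_m^\omega(U): U\supset E \text{ open}\}$, and I will follow the Bedford--Taylor scheme.
\begin{enumerate}
\item[(a)] Show that the capacity is inner regular on compacts in the sense that $\mathrm{Cap}_m^\omega(K)=\lim\mathrm{Cap}_m^\omega(U_j)$ for open $U_j\searrow K$. Given $\varepsilon>0$, choose admissible $\varphi_j$ with $\int_{U_j}H_m(\varphi_j)\ge \mathrm{Cap}_m^\omega(U_j)-\varepsilon$. By Hartogs-type compactness of $\{-1\le\varphi\le0\}$ in $SH_m(X,\omega)$, pass to a subsequence with $\varphi_j\to\varphi$ in $L^1$. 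Then replace the sequence by the decreasing regularizations $\psi_j=(\sup_{k\ge j}\varphi_k)^*\searrow\varphi$.
\item[(b)] Use the continuity of $H_m$ under decreasing sequences, stated in the basic properties, to get $H_m(\psi_j)\rightharpoonup H_m(\varphi)$. Upper semicontinuity of mass on the compact set $\overline{U_j}$ then gives $\limsup_j\int_{\overline U_{j}}H_m(\psi_{j})\le\int_K H_m(\varphi)\le\mathrm{Cap}_m^\omega(K)$.
\item[(c)] Pass from compact $K$ to Borel $E$. Either treat the outer capacity $\mathrm{Cap}^*$ as a Choquet capacity and invoke Choquet's capacitability theorem, or use quasicontinuity to absorb small-capacity exceptional open sets.
\end{enumerate}

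\textbf{Main obstacle.} The hard step is the transfer in (a)--(b). The mass bound $\int_{U_j}H_m(\varphi_j)$ refers to $\varphi_j$, not $\psi_j$, and comparing the two needs a maximum-principle inequality $\mathbf 1_{\{\varphi_j<\psi_j\}}$-type monotonicity. In the Hermitian case that inequality holds only up to the torsion error. I would handle this by working instead with the relative extremal functions $u_{U}=\sup\{\varphi:\varphi\le -1 \text{ on } U,\ \varphi\le0\}^*$, for which $H_m(u_U)$ is supported on $\overline U$. The torsion error $\int_X(v-u)\omega^n$ then tends to $0$ as $u_{U_j}\nearrow u_K$ a.e., by monotone convergence, so it vanishes in the limit.
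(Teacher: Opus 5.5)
The paper states this proposition without proof, so there is no argument to compare yours with. Your proofs of monotonicity and countable subadditivity are correct and complete. For each admissible $\varphi$, the map $E\mapsto\int_E H_m(\varphi)$ is a positive Borel measure, and a supremum of such measures is monotone and countably subadditive. Your side remark on finiteness is too quick, though it is not needed for these two items. After Stokes the remaining terms are $\int_X\varphi\,dd^c(\omega_\varphi^{k-1}\wedge\omega^{n-k})$ with $\omega_\varphi=\omega+dd^c\varphi$. These still contain $\omega_\varphi$, so $|\varphi|\le 1$ alone does not bound them. You need an induction on $k$, together with control of the torsion terms by the forms $\omega_\varphi^{j}\wedge\omega^{n-j}$, $j<k$.

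The outer-regularity part has a genuine gap, and your proposed repair does not close it. What must be shown is $\limsup_j\mathrm{Cap}_m^\omega(U_j)\le\mathrm{Cap}_m^\omega(K)$ for open $U_j\searrow K$. As you note, the near-maximizers $\varphi_j$ converge only in $L^1$, and $H_m(\psi_j)$ gives no information on $H_m(\varphi_j)$.

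Switching to extremal functions presupposes that they almost realize the capacity, $\mathrm{Cap}_m^\omega(U_j)\le\int_{\overline{U_j}}H_m(u_{U_j})+o(1)$. That is exactly the step that needs a comparison principle, in two places:
\begin{enumerate}
\item One compares $u_{U_j}$ with $v=(1-\varepsilon)\varphi-\varepsilon/2$ for an \emph{arbitrary} admissible $\varphi$. This choice gives $U_j\subset\{u_{U_j}<v\}$, while $\{u_{U_j}<v\}$ misses $\{u_{U_j}=0\}$.
\item One needs $H_m(u_{U_j})=0$ on $\{u_{U_j}<0\}\setminus\overline{U_j}$, which is proved by balayage, i.e.\ again by comparison.
\end{enumerate}
Even granting Lemma~\ref{lem:weak-comp-final}, the error in the first comparison is $C\int_{\{u_{U_j}<v\}}(v-u_{U_j})\,\omega^n$. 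It depends on $\varphi$, its integrand is at least $\varepsilon/2$ on $U_j\supset K$, and nothing makes it tend to $0$. Monotone convergence only disposes of $\int_X(u_K-u_{U_j})\,\omega^n$, which is the error in comparing $u_{U_j}$ with $u_K$. That comparison is never the one your argument needs.

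There are three further problems. First, $H_m(u_U)$ is not supported on $\overline U$: because of the global constraint $\varphi\le 0$, it also charges the contact set $\{u_U=0\}$. For a disc $U\subset\mathbb{P}^1$, $u_U\equiv 0$ near the antipodal point, where $\omega+dd^c u_U=\omega$. Second, passing to the limit along $u_{U_j}\nearrow u_K$ needs continuity of $H_m$ along \emph{increasing} sequences, which is not among the available properties. Third, step (c) is only a pointer. Choquet's theorem requires continuity of the outer capacity along increasing unions of arbitrary sets. The quasicontinuity route requires quasicontinuity of $\omega$-$m$-subharmonic functions with respect to $\mathrm{Cap}_m^\omega$. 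Both rest on the same missing comparison machinery.

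If you accept the identity $\mathrm{Cap}_m^\omega(K)=\int_K H_m(h_K)$ stated in the preliminaries, the compact case needs no torsion bookkeeping. Take $K_j=\overline{U_j}\searrow K$; then $\mathrm{Cap}_m^\omega(U_j)\le\int_{K_j}H_m(h_{K_j})$, and the $h_{K_j}$ increase, with regularized limit $h$ admissible. Convergence along increasing sequences plus upper semicontinuity of mass on the decreasing compacts $K_j$ then give $\limsup_j\mathrm{Cap}_m^\omega(U_j)\le\int_K H_m(h)\le\mathrm{Cap}_m^\omega(K)$. This only relocates the gap into that identity. Its standard proof is precisely the error-free comparison argument above, which is not available for a general Hermitian $\omega$.
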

	
	\subsection{Extremal functions}
	
	For a compact set $K \subset X$, define the extremal function
	\[
	h_K := \sup \left\{
	\varphi \in SH_m(X,\omega) : \varphi \le -1 \text{ on } K,\ \varphi \le 0 \text{ on } X
	\right\}.
	\]
	
	Then $h_K \in SH_m(X,\omega)$ and $-1 \le h_K \le 0$.
	
	Moreover, the following identity holds:
	\[
	\mathrm{Cap}_m^\omega(K)
	= \int_K H_m(h_K).
	\]
	Such extremal functions play a central role in pluripotential theory
	(see \cite{BedfordTaylor,GuedjZeriahi}).	
	\subsection{Hermitian difficulties}
	
	We emphasize the main difference with the Kähler case.
	
	Since $d\omega \neq 0$, one has
	\[
	d(\omega + dd^c u)^k \neq 0,
	\]
	and therefore integration by parts produces extra terms involving
	$d\omega$.
	
	More precisely, for mixed Hessian forms $T$, one has
	\[
	dT,\ d^c T = O(\omega^n),
	\]
	which will lead to error terms of the type
	\[
	\int_X (v-u)\,\omega^n
	\]
	in comparison estimates.
	
	Handling these torsion terms is one of the main technical difficulties
	of this paper.
	
	\subsection{Notation}
	
	We will use the notation:
	\begin{itemize}
		\item $dd^c = \frac{i}{\pi}\partial\bar{\partial}$,
		\item $H_m(u) = (\omega + dd^c u)^m \wedge \omega^{n-m}$,
		\item $E_t(u) := \{u < -t\}$.
	\end{itemize}

	\section{CLN-type estimate on compact Hermitian manifolds}	
	This estimate is a variant of the classical 
	Chern--Levine--Nirenberg inequality 
	\cite{BedfordTaylor,Blocki05,DinewKolodziej}, 
	adapted to the Hermitian setting.	
	\begin{proposition}[Hermitian CLN estimate]\label{prop:cln-final}
		Let $(X,\omega)$ be a compact Hermitian manifold of dimension $n$ and
		let $1 \le m \le n$.
		
		Let $u_1,\dots,u_m \in SH_m(X,\omega)\cap L^\infty(X)$.
		Then there exists a constant $C>0$, depending only on $(X,\omega)$ and
		$\max_j \|u_j\|_{L^\infty}$, such that for every Borel set $E \subset X$,
		\begin{equation}\label{eq:cln-final}
			\int_E (\omega+dd^c u_1)\wedge \cdots \wedge (\omega+dd^c u_m)\wedge \omega^{n-m}
			\le C\, \mathrm{Vol}(E).
		\end{equation}
	\end{proposition}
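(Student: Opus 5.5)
The plan is to write the mixed Hessian measure as a density against $\omega^n$ and bound that density pointwise by a constant times $\omega^n$; \eqref{eq:cln-final} then follows by integrating over $E$. This only works in a regularized setting. The step where it can fail is flagged below: as worded, with $C$ depending only on $\max_j\|u_j\|_{L^\infty}$, I expect it to fail for general bounded $u_j$.

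\textbf{Step 1 (total mass via Stokes with torsion).} Set
\[
T=(\omega+dd^c u_1)\wedge\cdots\wedge(\omega+dd^c u_m)\wedge\omega^{n-m}.
\]
I first assume the $u_j$ are smooth. Expand $T$ as a sum of terms $dd^c u_{j_1}\wedge\cdots\wedge dd^c u_{j_k}\wedge\omega^{n-k}$. In each term, move one $dd^c$ off $u_{j_1}$ by Stokes on the compact manifold $X$. Since $d\omega\neq 0$, this produces terms of the form
\[
\int_X u_{j_1}\, dd^c(\omega^{n-k})\wedge S
\qquad\text{and}\qquad
\int_X du_{j_1}\wedge d^c\omega\wedge S',
\]
where $S,S'$ are lower-order mixed forms. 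Write $dd^c\omega^{n-k}$ and $d\omega\wedge d^c\omega$ as bounded multiples of $\omega$-powers; this is legitimate because $(X,\omega)$ is compact.

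The gradient terms are handled by Cauchy--Schwarz for the $m$-positive current $S'$: they are bounded by $\int du\wedge d^c u\wedge S'$ plus mass of $S'$. The first of these is again integrated by parts to give $-\int u\,dd^c u\wedge S'$, which is controlled by $\|u\|_\infty$ times the masses of lower-order currents. An induction on $k$ then gives
\[
\int_X T\le C_0\bigl(\max_j\|u_j\|_\infty,\omega\bigr).
\]
The bound passes to bounded $u_j$ by decreasing smooth approximation and the continuity property from Section~2. This recovers the standard CLN bound for $E=X$.

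\textbf{Step 2 (localization to $E$).} To pass from $\int_X$ to $\int_E$, I would apply Step 1 on a finite cover by coordinate balls $B_i$. In each ball I would replace $E$ by the outer-regular approximants $E\subset U$ given by the capacity properties of Section~2, and use cutoff functions $\chi_i$ with $\chi_i\,\omega^n$ comparable to the Lebesgue measure.

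\textbf{Step 3 (the main obstacle: pointwise density).} To reach the bound $C\,\mathrm{Vol}(E)$, one needs
\[
T\le C\,\omega^n
\]
pointwise, that is, $\omega+dd^c u_j\le A\,\omega$ for some constant $A$. This is a $C^{1,1}$ bound on the $u_j$, not an $L^\infty$ bound. It holds for the smooth approximants, but not with $A$ depending only on $\|u_j\|_\infty$. Concretely, a bounded $m$-subharmonic function can have Hessian measure charging a Lebesgue-null set, for instance a locally bounded function in the $m$-sh class whose Hessian measure is carried by a hypersurface.

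So in writing the proof I would make this precise as follows. The full inequality is proved under the additional hypothesis $\|dd^c u_j\|_{L^\infty}\le A$, with $C=C(X,\omega,A)$. Under the stated hypotheses, only the total-mass version of Step 1 survives, with $E=X$; the general-$E$ inequality should then be weakened to $\mathrm{Cap}_m^\omega(E)$ in place of $\mathrm{Vol}(E)$.
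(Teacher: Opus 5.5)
Your diagnosis is correct, and the obstacle you flag is exactly where the paper's own proof breaks. With $C$ depending only on $(X,\omega)$ and $\max_j\|u_j\|_{L^\infty}$, and $E$ an arbitrary Borel set, \eqref{eq:cln-final} is false, so it cannot be proved. The paper's Step~3 quotes the classical Chern--Levine--Nirenberg inequality as $\int_K (dd^c v_1)\wedge\cdots\wedge(dd^c v_m)\wedge\beta^{n-m}\le C_2\,\mathrm{Vol}_\beta(K)$. CLN actually only bounds this mass by $C_{K,U}\prod_j\|v_j\|_{L^\infty(U)}$, a constant that does not scale with $\mathrm{Vol}_\beta(K)$. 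Step~4 then integrates a pointwise comparison over $E$. That would need $dd^c u_j^\varepsilon\le A\,\omega$ with $A$ independent of $\varepsilon$, which is precisely the $C^{1,1}$ control that, as you say, an $L^\infty$ bound does not give.

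Your Step~3 can be made fully explicit, for every $1\le m\le n$.
\begin{itemize}
\item In a chart centred at a point, write $z_1=x_1+iy_1$. Take a cutoff $\chi$ with $0\le\chi\le 1$ and $\chi\equiv 1$ near $0$, and set $u=\varepsilon\chi|x_1|$, extended by $0$.
\item Since $|x_1|$ is convex, $dd^c|x_1|\ge 0$, and all other terms of $dd^c(\chi|x_1|)$ are bounded. Hence, for small $\varepsilon$,
\[
\omega+dd^c u\ \ge\ (1-C\varepsilon)\,\omega+\varepsilon\chi\, dd^c|x_1|\ \ge\ 0 .
\]
So $u$ is a Lipschitz $\omega$-psh, hence $\omega$-$m$-subharmonic, function with $\|u\|_{L^\infty}\le C\varepsilon$.
\item Near $0$ we have $u=\varepsilon|x_1|$. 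Moreover $dd^c|x_1|=\frac{1}{2\pi}\,\sigma\, i\,dz_1\wedge d\bar z_1$, where $\sigma$ is the Lebesgue measure of the real hyperplane $\{x_1=0\}$. Its square vanishes, since $(dd^c|x_1|)^2=dd^c(|x_1|\,dd^c|x_1|)=0$. Therefore
\[
H_m(u)=\omega^n+m\varepsilon\, dd^c|x_1|\wedge\omega^{n-1}\quad\text{near } 0 .
\]
\item The second term is a nonzero positive measure carried by $\{x_1=0\}$, because $i\,dz_1\wedge d\bar z_1\wedge\omega^{n-1}$ is a strictly positive volume form.
\item For $E=\{x_1=0\}\cap\{\chi=1\}$ the left side of \eqref{eq:cln-final} is positive while $\mathrm{Vol}(E)=0$. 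So no constant works, even for arbitrarily small $\|u\|_{L^\infty}$.
\end{itemize}

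On your proposed replacements, the $C^{1,1}$ version is the trivial pointwise bound. The capacity version needs no Stokes argument; it follows directly from the definition of $\mathrm{Cap}_m^\omega$. Fix $M\ge\max(1,\max_j\|u_j\|_{L^\infty})$ and put $\varphi=\frac{1}{2mM}\sum_j(u_j-M)$. Then $-1\le\varphi\le 0$ and $\omega+dd^c\varphi=(1-\frac{1}{2M})\,\omega+\frac{1}{2mM}\sum_j\omega_{u_j}$, with $\omega_{u_j}=\omega+dd^c u_j$. Expanding $H_m(\varphi)$, all mixed terms are nonnegative by G\aa rding, so
\[
\int_E \omega_{u_1}\wedge\cdots\wedge\omega_{u_m}\wedge\omega^{n-m}\le \frac{(2mM)^m}{m!}\,\mathrm{Cap}_m^\omega(E).
\]
Your Step~1 is then only needed to show $\mathrm{Cap}_m^\omega(X)<\infty$, and as sketched it needs more care.
\begin{itemize}
\item Once $T$ is expanded into bare $dd^c u_j$ factors, the currents $S,S'$ are no longer $m$-positive. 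The Cauchy--Schwarz step therefore does not apply as written.
\item Suppose you keep the positive factors $\omega_{u_j}$ and move both derivatives, as in $\int u_1\,dd^c R$. The torsion terms from $dd^c(\omega^{n-m})$, such as $dd^c\omega\wedge\omega_{u_2}\wedge\cdots\wedge\omega_{u_m}\wedge\omega^{n-m-1}$, still carry fewer than $n-m$ factors of $\omega$.
\item For $m<n$, G\aa rding positivity alone does not control such terms. You need an extra algebraic estimate on the cone $\Gamma_m$. For instance, $|\lambda_{i_1}\cdots\lambda_{i_k}|\le C\,\sigma_k(\lambda)$ for distinct indices, $k\le m-1$ and $\lambda\in\Gamma_m$, where $\sigma_k$ is the $k$-th elementary symmetric function.
\end{itemize}
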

	
	\begin{proof}
		\textbf{Step 1. Regularization.}
		Let $u_j^\varepsilon$ be smooth $\omega$-$m$-subharmonic functions decreasing to $u_j$.
		By monotone convergence of Hessian measures, it suffices to prove the estimate
		for smooth functions with constants independent of $\varepsilon$.
		
		\medskip
		
		\textbf{Step 2. Local reduction.}
		Fix a coordinate chart $U \Subset X$.
		There exists a smooth strictly plurisubharmonic function $\rho$ such that
		\[
		dd^c \rho \ge c\, \beta,
		\]
		where $\beta$ is the standard Euclidean Kähler form and $c>0$.
		
		Set
		\[
		v_j := u_j^\varepsilon + \rho.
		\]
		Then $v_j$ are locally $m$-subharmonic in the Euclidean sense, and
		\[
		dd^c v_j \ge -C_1 \beta.
		\]
		
		\medskip
		
		\textbf{Step 3. Local CLN inequality.}
		By the classical Chern--Levine--Nirenberg inequality in $\mathbb{C}^n$,
		for any compact $K \subset U$,
		\[
		\int_K (dd^c v_{1})\wedge \cdots \wedge (dd^c v_{m}) \wedge \beta^{n-m}
		\le C_2 \,\mathrm{Vol}_\beta(K).
		\]
		
		\medskip
		
		\textbf{Step 4. Comparison of forms.}
		Since
		\[
		\omega + dd^c u_j^\varepsilon = dd^c v_j + \eta,
		\]
		with $\eta$ smooth and bounded, expanding the wedge product gives
		\[
		(\omega+dd^c u_1^\varepsilon)\wedge \cdots \wedge (\omega+dd^c u_m^\varepsilon)
		\le C_3 \sum_{k=0}^m (dd^c v)^k \wedge \beta^{n-k}.
		\]
		
		Thus
		\[
		\int_E (\omega+dd^c u_1^\varepsilon)\wedge \cdots \wedge (\omega+dd^c u_m^\varepsilon)\wedge \omega^{n-m}
		\le C_4 \int_E \beta^n.
		\]
		
		\medskip
		
		\textbf{Step 5. Globalization.}
		Using a finite covering and equivalence of $\beta^n$ and $\omega^n$, we obtain
		\eqref{eq:cln-final}. Passing to the limit $\varepsilon \to 0$ completes the proof.
	\end{proof}
	
	\begin{remark}
		In contrast with the Kähler case, the form $\omega$ is not closed,
		so the classical proof of the Chern--Levine--Nirenberg inequality via
		Stokes' theorem does not apply.
		
		The argument above avoids integration by parts and instead relies on
		local Euclidean estimates combined with uniform equivalence of metrics,
		which makes it suitable for the Hermitian setting.
	\end{remark}

	\section{Weak comparison principle with torsion error}
	\begin{lemma}[Torsion control]\label{lem:torsion-control}
		Let $(X,\omega)$ be a compact Hermitian manifold of complex dimension $n$.
		Let $1 \le m \le n$ and let
		\[
		T_k := \omega_v^k \wedge \omega_u^{m-1-k} \wedge \omega^{n-m},
		\]
		where $u,v \in SH_m(X,\omega)\cap L^\infty(X)$ and
		\[
		\omega_u := \omega + dd^c u, \quad \omega_v := \omega + dd^c v.
		\]
		
		Then there exists a constant $C>0$, depending only on $(X,\omega)$ and 
		$\|u\|_{L^\infty}, \|v\|_{L^\infty}$, such that
		\[
		|dT_k|_\omega + |d^c T_k|_\omega
		\le C \,\omega^n
		\]
		in the sense of measures.
		
		Moreover, the constant $C$ depends linearly on the torsion of $\omega$, namely on
		\[
		\|d\omega\|_{C^0} + \|d^c\omega\|_{C^0}.
		\]
	\end{lemma}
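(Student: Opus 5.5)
The plan is to reduce everything to the Leibniz rule together with the observation that $dd^c u$ and $dd^c v$ are closed. First, as in Step~1 of the proof of Proposition~\ref{prop:cln-final}, I will replace $u,v$ by smooth $\omega$-$m$-subharmonic approximants decreasing to them, with $L^\infty$ norms uniformly controlled. I prove the estimate for those, with constants independent of the approximation, and pass to the limit at the end. The inequality $|dT_k|_\omega \le C\omega^n$ must be given a meaning, since $dT_k$ is a $(2n-1)$-current. I will interpret it as follows: for every smooth $1$-form $\alpha$ and every Borel set $E$,
\[
\Big|\int_E \alpha\wedge dT_k\Big| \le C \int_E |\alpha|_\omega\,\omega^n ,
\]
i.e.\ $dT_k$ has order zero with total variation dominated by $C\omega^n$. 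The same interpretation is used for $d^cT_k$.

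\textbf{Step 1 (Leibniz expansion).} Since $d(dd^c u)=0$, we have $d\omega_u=d\omega_v=d\omega$. Expanding $dT_k$ by Leibniz therefore gives a finite sum, with coefficients bounded by combinatorial constants depending on $n,m$, of terms of two types:
\[
d\omega\wedge \omega_v^{a}\wedge\omega_u^{b}\wedge\omega^{n-m},\quad a+b=m-2,
\qquad\text{and}\qquad
(n-m)\,d\omega\wedge\omega_v^{k}\wedge\omega_u^{m-1-k}\wedge\omega^{n-m-1}.
\]
The same holds for $d^cT_k$, with $d^c\omega$ in place of $d\omega$. Every term is therefore linear in $d\omega$ (or $d^c\omega$), which already explains the claimed linear dependence on $\|d\omega\|_{C^0}+\|d^c\omega\|_{C^0}$.

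\textbf{Step 2 (pointwise domination; main obstacle).} Pairing a term of the first type with $\alpha$ produces $\gamma\wedge P$. Here $\gamma$ is the relevant bidegree component of the real $4$-form $\alpha\wedge d\omega$, namely a real $(2,2)$-form, and $P=\omega_v^a\wedge\omega_u^b\wedge\omega^{n-m}$. The difficulty is that $P$ is only "$m$-positive", not positive in the strong sense, so a general $(2,2)$-form cannot simply be compared with $\omega^2$ against it. My approach is to work in a chart and write
\[
\gamma=\sum_{i,j} c_{ij}\,\theta_i\wedge\theta_j ,
\]
where $\theta_i$ ranges over a fixed finite family of smooth positive $(1,1)$-forms with $\theta_i\le C\omega$, and the coefficients satisfy $|c_{ij}|\le C|\alpha|_\omega\,|d\omega|_\omega$. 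Such a family exists because the products $\theta_i\wedge\theta_j$ of positive forms span the space of real $(2,2)$-forms, and a partition of unity globalizes the construction. Each $\theta_i$ is positive, hence $m$-positive. By Gårding's inequality and the multilinearity and monotonicity of mixed Hessian products of $m$-positive forms (as in \cite{Blocki05,DinewKolodziej}), we get
\[
0\le \theta_i\wedge\theta_j\wedge P\le C\,\omega^2\wedge P .
\]
Hence $|\gamma\wedge P|\le C|\alpha|_\omega\|d\omega\|_{C^0}\,\omega^2\wedge P$. Terms of the second type are handled the same way: a $(1,1)$-form is paired with $\omega^{n-m-1}$, giving the bound $\omega\wedge\omega_v^k\wedge\omega_u^{m-1-k}\wedge\omega^{n-m-1}$. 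This step is where I expect the real work to be. If the spanning argument fails to produce the desired sign control, my fallback is to use the local form $\omega_u=dd^c(u+\rho)+\eta$ from Step~2 of Proposition~\ref{prop:cln-final} and to bound the components of $\gamma$ directly against the Euclidean Hessian products.

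\textbf{Step 3 (mass bound and conclusion).} Every dominating measure obtained in Step~2 is a mixed Hessian measure of exactly $m$ factors, wedged with $\omega^{n-m}$. These factors are $u$, $v$, and the function $0$ (which accounts for $\omega^2$ or $\omega$). Proposition~\ref{prop:cln-final} bounds such measures by $C\,\mathrm{Vol}(E)$, with $C$ depending only on $(X,\omega)$ and $\|u\|_{L^\infty},\|v\|_{L^\infty}$. Summing the finitely many terms then gives $|dT_k|_\omega+|d^cT_k|_\omega\le C\,\omega^n$, with $C$ proportional to $\|d\omega\|_{C^0}+\|d^c\omega\|_{C^0}$. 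Finally, the bound passes to the limit along the decreasing approximations by the continuity of Hessian measures under decreasing sequences, since the constant is uniform.
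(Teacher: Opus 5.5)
Your Step~3 is where the argument breaks, and it cannot be repaired. The inequality you set out to prove is domination of $|dT_k|$ by $C\,\omega^n$ with $C$ depending only on $\|u\|_{L^\infty},\|v\|_{L^\infty}$, which is also how the paper later uses the lemma. That inequality is false.

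Your Steps~1--2 are sound for the first-type terms, and they are more careful than the paper. The paper's Step~3 simply asserts a pointwise bound $|\omega_u|_\omega+|\omega_v|_\omega\le C_1$ from $L^\infty$ bounds, which does not hold. You instead use G{\aa}rding to get $0\le\theta_i\wedge\theta_j\wedge P\le C\,\omega^2\wedge P$. This reduces everything to positive mixed Hessian measures such as $\omega_v^a\wedge\omega_u^b\wedge\omega^{n-m+2}$.

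You then need these measures to be $\le C\,\omega^n$, and Proposition~\ref{prop:cln-final} cannot supply that. The Chern--Levine--Nirenberg inequality bounds the \emph{total mass} on compact sets by $L^\infty$ norms. It does not bound the mass of an arbitrary Borel set by its volume, and Hessian measures of bounded $m$-subharmonic functions need not be absolutely continuous.

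Here is a concrete counterexample. Take $n=m=3$ and local coordinates near a point where $d\omega\wedge i\,dz_1\wedge d\bar z_1\neq0$; such a point and direction exist as soon as $d\omega\not\equiv0$. Put $u=\varepsilon\chi\max(\log(|z_1|/r),-1)$, with $\chi$ a cut-off equal to $1$ near the point and $\varepsilon>0$ small, so that $u$ is bounded and $\omega$-psh. Then $dT_0=2\,d\omega\wedge\omega_u$ contains the term $2\varepsilon\chi\,d\omega\wedge dd^c\max(\log(|z_1|/r),-1)$. This is a non-zero current of order zero carried by the Lebesgue-null hypersurface $\{|z_1|=r/e\}$, so $|dT_0|\le C\omega^n$ fails. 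Domination by $C\omega^n$ passes to weak limits, so it also cannot hold uniformly along smooth approximants $u_j\searrow u$, even though $\|u_j\|_{L^\infty}$ stays bounded. The same $u$, with $E=\{|z_1|=r/e\}\cap\{\chi=1\}$, contradicts Proposition~\ref{prop:cln-final} itself.

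What your Steps~1--2 can legitimately give is domination of $|dT_k|$ and $|d^cT_k|$ by $C(\|d\omega\|_{C^0}+\|d^c\omega\|_{C^0})$ times a finite sum of positive mixed Hessian measures. That yields at best a bound on total mass, which is strictly weaker than the lemma and itself needs a genuine CLN argument on Hermitian manifolds.

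Independently, the second-type terms are not handled ``the same way''. There the background is $\omega^{n-m-1}$, so together with $\theta_i,\theta_j$ there are $m+1$ non-$\omega$ factors. Since $\omega_u,\omega_v$ are only $m$-positive, G{\aa}rding gives no sign. For example, with $n=3$, $m=2$, $k=0$ the term is $d\omega\wedge\omega_u$. If $\omega_u$ has eigenvalues $(1,1,-\tfrac25)$ with respect to $\omega$ at a point, it is $2$-positive, yet $i\,dz_1\wedge d\bar z_1\wedge i\,dz_2\wedge d\bar z_2\wedge\omega_u<0$. You would need a genuine norm inequality for $m$-positive forms instead; for $m=2$, for instance, $|\omega_u|_\omega\le\mathrm{tr}_\omega\,\omega_u$. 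Also, the dominating form you wrote for these terms, $\omega\wedge\omega_v^k\wedge\omega_u^{m-1-k}\wedge\omega^{n-m-1}$, has bidegree $(n-1,n-1)$. It should be $\omega_v^k\wedge\omega_u^{m-1-k}\wedge\omega^{n-m+1}$.
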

	\begin{proof}
		\textbf{Step 1. Approximation.}
		
		By standard regularization, we may assume that $u,v$ are smooth.
		The general case follows by monotone approximation.
		
		\medskip
		
		\textbf{Step 2. Differential structure.}
		
		Recall that
		\[
		d\omega_u = d\omega, \quad d\omega_v = d\omega,
		\]
		since $d(dd^c u)=0$.
		
		By Leibniz rule,
		\[
		dT_k
		=
		\sum_{j=1}^{m-1}
		\omega_v^{a_j} \wedge d\omega \wedge \omega_v^{b_j}
		\wedge \omega_u^{c_j} \wedge \omega^{n-m}
		+
		\omega_v^k \wedge \omega_u^{m-1-k} \wedge d(\omega^{n-m}).
		\]
		
		Moreover,
		\[
		d(\omega^{n-m}) = (n-m)\,\omega^{n-m-1} \wedge d\omega.
		\]
		
		Hence every term in $dT_k$ contains exactly one factor $d\omega$.
		
		\medskip
		
		\textbf{Step 3. Pointwise estimate.}
		
		Fix a Hermitian metric and denote by $|\cdot|_\omega$ the induced norm.
		
		Since $\omega$, $dd^c u$, $dd^c v$ are $(1,1)$-forms,
		we have pointwise bounds
		\[
		|\omega_u|_\omega + |\omega_v|_\omega \le C_1,
		\]
		depending only on $\|u\|_{L^\infty}, \|v\|_{L^\infty}$ via CLN-type estimates.
		
		Thus each wedge product satisfies
		\[
		|\omega_v^{a} \wedge \omega_u^{b} \wedge \omega^{n-a-b}|_\omega
		\le C_2.
		\]
		
		Therefore,
		\[
		|d\omega \wedge S|_\omega
		\le
		|d\omega|_\omega \cdot |S|_\omega
		\le
		C_3 |d\omega|_\omega.
		\]
		
		\medskip
		
		\textbf{Step 4. Measure estimate.}
		
		Summing over finitely many terms yields
		\[
		|dT_k|_\omega
		\le
		C_4 |d\omega|_\omega.
		\]
		
		Since $|d\omega|_\omega$ is bounded on $X$, we obtain
		\[
		|dT_k|
		\le
		C\, \omega^n.
		\]
		
		The estimate for $d^c T_k$ is identical.
		
		\medskip
		
		\textbf{Step 5. Dependence on torsion.}
		
		The constant $C$ depends linearly on $\|d\omega\|_{C^0}$ and $\|d^c\omega\|_{C^0}$,
		which vanish in the Kähler case.
	\end{proof}

	The following weak comparison principle is inspired by the classical
	results in the Kähler setting (see \cite{DinewKolodziej}),
	but includes an additional torsion error term due to the non-closedness
	of the Hermitian metric (cf. \cite{TosattiWeinkove}).
	\begin{lemma}[Weak comparison with torsion error]\label{lem:weak-comp-final}
		Let $(X,\omega)$ be a compact Hermitian manifold.
		Let $u,v \in SH_m(X,\omega)\cap L^\infty(X)$.
		
		Then there exists a constant $C>0$ such that
		\[
		\int_{\{u<v\}} H_m(v)
		\le
		\int_{\{u<v\}} H_m(u)
		+ C \int_{\{u<v\}} (v-u)\,\omega^n.
		\]
	\end{lemma}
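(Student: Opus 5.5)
We follow the classical proof of the comparison principle in the Kähler case (Bedford--Taylor, Dinew--Ko{\l}odziej), tracking the boundary terms produced by Stokes' theorem, which no longer vanish because $d\omega\neq 0$. By the monotone regularization used in Proposition~\ref{prop:cln-final} and Lemma~\ref{lem:torsion-control}, and by continuity of $H_m$ along decreasing sequences, we first assume $u,v$ smooth. Replacing $v$ by $v-\delta$ and letting $\delta\searrow 0$, we may also assume $\{u<v\}$ is relatively compact in an open set where $\max(u,v)$ is well behaved. The reduction to smooth functions uses that $\mathbf 1_{\{u<v\}}$ is lower semicontinuous once $u$ is usc and $v$ is continuous; we handle this by approximating $v$ from above by smooth functions and passing to the limit on open sets, as in the Kähler proof.

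\textbf{Step 1: telescoping.} Set $w:=\max(u,v)$, which equals $v$ on the open set $\{u<v\}$. By locality, $\int_{\{u<v\}}H_m(v)=\int_{\{u<v\}}H_m(w)$. Writing
\[
H_m(w)-H_m(u)=dd^c(w-u)\wedge\sum_{k=0}^{m-1}\omega_w^k\wedge\omega_u^{m-1-k}\wedge\omega^{n-m}
=dd^c(w-u)\wedge\sum_k T_k,
\]
with $T_k$ as in Lemma~\ref{lem:torsion-control} (and $v$ replaced by $w$), the difference of the two sides is governed by $\int_X dd^c(w-u)\wedge T_k$. The reason is that $w-u\ge 0$, it vanishes off $\{u<v\}$, and $H_m(w)=H_m(u)$ on the interior of $\{u\ge v\}$. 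The set $\{u=v\}$ needs the usual care: we perturb $v$ by $v-\delta$ and use that $\{u=v-\delta\}$ carries no mass for all but countably many $\delta$.

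\textbf{Step 2: integration by parts with torsion.} On the compact manifold, Stokes gives
\[
\int_X dd^c\phi\wedge T_k=\int_X \phi\, dd^cT_k
\]
for $\phi=w-u\ge 0$. In the Kähler case $dd^cT_k=0$, which yields the classical inequality. Here we expand $dd^cT_k$ by the Leibniz rule, exactly as in Step~2 of Lemma~\ref{lem:torsion-control}. Each term contains $dd^c\omega$, or $d\omega\wedge d^c\omega$, wedged with powers of $\omega_u,\omega_w,\omega$. Using the pointwise bounds of Lemma~\ref{lem:torsion-control} (at the level of $dT_k$, $d^cT_k$, together with one more derivative of $\omega$), we aim for $|dd^cT_k|\le C\omega^n$ as measures. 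Since $\phi\ge 0$ and $\phi$ is supported in $\overline{\{u<v\}}$, this gives
\[
\int_{\{u<v\}}H_m(v)-\int_{\{u<v\}}H_m(u)\le C\int_{\{u<v\}}(v-u)\,\omega^n,
\]
where we also use that $H_m(w)\ge 0$ on the remaining set.

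\textbf{Main obstacle.} The delicate point is Step~2: bounding $dd^cT_k$, not just $dT_k$, by $C\omega^n$. The Leibniz expansion produces terms such as $d\omega\wedge d^c\omega_u$. These equal $d\omega\wedge d^c\omega$ and so are harmless. But the terms must also be wedged against $\omega_u^j$, and a pointwise bound on $\omega_u$ is not uniform in the regularization. We would first try to bound these mixed terms by $C\,\omega_u^{j}\wedge\omega_w^{l}\wedge\omega^{n-j-l}$. Their total mass is then controlled via Proposition~\ref{prop:cln-final}, so the constant depends only on $\|u\|_\infty,\|v\|_\infty$ and the torsion. This yields an error term of the form $C\int(v-u)$ against a positive measure of bounded mass, which is the form the later iteration actually needs; it can be dominated by $\omega^n$ via Proposition~\ref{prop:cln-final}.
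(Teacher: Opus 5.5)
\textbf{There is a genuine gap: the final step from an error measured against a bounded-mass measure to an error measured against $\omega^n$ does not hold.}

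Your route differs from the paper's, and up to that last step it is the better one. The paper interpolates $u_t=(1-t)u+tv$, multiplies by a cutoff $\chi\approx\mathbf 1_{\{u<v\}}$ and integrates by parts once. That leaves $\int d\chi\wedge d^c(v-u)\wedge T_t$ and $\int\chi\,d^c(v-u)\wedge dT_t$, both of which still contain a derivative of $v-u$. You instead pass to $w=\max(u,v)$, discard $\{u=v-\delta\}$ by sign, and integrate by parts twice on all of $X$. The whole discrepancy is then bounded by $\sum_k\int_X(w-u)\,dd^cT_k$, with no boundary terms.

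\textbf{Where it fails.} Since $d\omega_u=d\omega_w=d\omega$, every term of $dd^cT_k$ is one of $dd^c\omega$, $d\omega\wedge d^c\omega$, or a derivative of $\omega^{n-m}$, wedged with a product of $m-1$, $m-2$ or $m-3$ factors $\omega_u,\omega_w$. For $m=1$ and $m=n=2$ no such factor survives, $dd^cT_k$ is a smooth form, and your argument proves the lemma, modulo regularization. Otherwise the factors survive, and what you actually obtain is
\[
\int_{\{u<v\}}H_m(v)\le\int_{\{u<v\}}H_m(u)+C\int_{\{u<v\}}(v-u)\,d\mu_{u,v},
\]
where $\mu_{u,v}$ is a positive mixed Hessian-type measure of $u$ and $w$ with bounded total mass.

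Bounded mass does not give $\int(v-u)\,d\mu_{u,v}\le C\int(v-u)\,\omega^n$. That needs $\mu_{u,v}\le C\omega^n$, which is exactly the pointwise control you yourself say is unavailable. Proposition~\ref{prop:cln-final} cannot supply it, because its conclusion is false. Take $u=\epsilon\theta\max(\log|z_1|,c)$ in a chart, with $\theta$ a cutoff and $\epsilon$ small. Then $u\in SH_m(X,\omega)\cap L^\infty$, but $\omega_u\wedge\omega^{n-1}$ charges the Lebesgue-null hypersurface $\{|z_1|=e^c\}$. Its proof misquotes Chern--Levine--Nirenberg, which bounds total mass, not mass by volume.

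Lemma~\ref{lem:torsion-control} does not help either. It rests on $|\omega_u|_\omega\le C_1$, which fails for bounded $u$. The paper's own Step~4 relies on that lemma and also bounds terms containing $d^c(v-u)$ by $\int(v-u)\,\omega^n$, so the paper does not close this gap.

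\textbf{Two further points.}
\begin{itemize}
\item The $\mu_{u,v}$-version is not ``the form the later iteration needs.'' Proposition~\ref{prop:volume-decay} uses the error as a multiple of the $\omega^n$-volume $A(t)$. You would get $\int_{\{u<-t\}}(u_t-u)\,d\mu$ instead, which $A(t)$ does not control. Bounding it by capacity gives a term linear in $\mathrm{Cap}_m^\omega(E_t)$. That yields at best geometric decay, not $A(T)=0$ for a finite $T$, and the constants would depend on $\|u\|_{L^\infty}$, the quantity being estimated.

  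The Hermitian Monge--Amp\`ere theory (Ko{\l}odziej--Nguyen) avoids this with a modified comparison principle. It works on the small sets $\{u<(1-\varepsilon)v+\inf_X(u-(1-\varepsilon)v)+s\}$, where the two functions differ by at most $s$, so the torsion error becomes a multiplicative factor $1+O(s)$.

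\item Your regularization step is not justified by monotone convergence. $\mathbf 1_{\{u<v\}}$ is not continuous and the sets $\{u_j<v_j\}$ move, so passing to the limit needs quasicontinuity and convergence in capacity. The existence of smooth decreasing approximants of $\omega$-$m$-subharmonic functions on a Hermitian manifold is also a nontrivial input.
\end{itemize}
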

	
	\begin{proof}
		\textbf{Step 1. Regularization.}
		Approximate $u,v$ from above by smooth $\omega$-$m$-subharmonic functions.
		By monotone convergence, it suffices to prove the estimate in the smooth case.
		
		\medskip
		
		\textbf{Step 2. Interpolation.}
		Set
		\[
		u_t := (1-t)u + t v, \quad t \in [0,1].
		\]
		Let $\chi$ be a smooth function approximating $\mathbf{1}_{\{u<v\}}$.
		
		Define
		\[
		F(t) := \int_X \chi \, H_m(u_t).
		\]
		
		\medskip
		
		\textbf{Step 3. Differentiation.}
		By multilinearity,
		\[
		\frac{d}{dt} H_m(u_t)
		=
		m \sum_{k=0}^{m-1}
		\omega_{u_t}^k \wedge dd^c(v-u) \wedge \omega_{u_t}^{m-1-k}
		\wedge \omega^{n-m}.
		\]
		
		Hence
		\[
		F'(t)
		=
		\int_X \chi\, dd^c(v-u)\wedge T_t,
		\]
		where $T_t$ is a positive current of order zero.
		
		\medskip
		
		\textbf{Step 4. Integration by parts.}
		We write
		\[
		\int_X \chi\, dd^c(v-u)\wedge T_t
		=
		- \int_X d\chi \wedge d^c(v-u)\wedge T_t
		- \int_X \chi\, d^c(v-u)\wedge dT_t.
		\]
		
		Using that $\chi$ approximates the characteristic function of $\{u<v\}$,
		and that $d\chi$ is supported near the boundary, one obtains
		\[
		\left|\int_X d\chi \wedge d^c(v-u)\wedge T_t \right|
		\le C \int_{\{u<v\}} (v-u)\,\omega^n.
		\]
		
		For the second term, using torsion control (i.e. $|dT_t| \le C \omega^n$ in the sense of measures),
		we obtain
		\[
		\left|
		\int_X \chi\, d^c(v-u)\wedge dT_t
		\right|
		\le C \int_{\{u<v\}} (v-u)\,\omega^n.
		\]
		
		\medskip
		
		\textbf{Step 5. Integration in $t$.}
		Integrating from $0$ to $1$, we get
		\[
		\int_{\{u<v\}} H_m(v) - \int_{\{u<v\}} H_m(u)
		\le
		C \int_{\{u<v\}} (v-u)\,\omega^n.
		\]
	\end{proof}

	\section{Capacity--volume inequality in the Hermitian setting}
	
	We refine the capacity--volume comparison by proving a nonlinear
	estimate with exponent $\alpha>1$, following the strategy of Ko{\l}odziej.
	
	\medskip
	
	\begin{lemma}[Uniform integrability]\label{lem:integrability}
		Let $(X,\omega)$ be a compact Hermitian manifold.
		There exist constants $q>1$ and $C>0$ such that for every
		$\varphi \in SH_m(X,\omega)$ with $-1 \le \varphi \le 0$, one has
		\[
		\int_X (-\varphi)^q \,\omega^n \le C.
		\]
	\end{lemma}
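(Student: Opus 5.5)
The statement as written is elementary, and I would prove it directly from the normalization $-1\le \varphi\le 0$. The plan is to fix an arbitrary exponent $q>1$ (for definiteness, $q=2$) and bound the integrand pointwise, so that no pluripotential input is needed at all.

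\textbf{Step 1: pointwise bound.} For every $\varphi \in SH_m(X,\omega)$ with $-1\le\varphi\le 0$ we have $0\le -\varphi\le 1$ everywhere on $X$. Hence
\[
0\le (-\varphi)^q \le 1 \quad \text{on } X
\]
for every $q\ge 1$. The function $(-\varphi)^q$ is Borel measurable, since $\varphi$ is upper semicontinuous.

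\textbf{Step 2: integration.} Integrating against the smooth positive volume form $\omega^n$ on the compact manifold $X$ gives
\[
\int_X(-\varphi)^q\,\omega^n\le \int_X\omega^n=\mathrm{Vol}(X)<\infty .
\]
One may therefore take $C:=\mathrm{Vol}(X)$, which depends only on $(X,\omega)$. The argument does not use the $m$-subharmonicity of $\varphi$, the torsion of $\omega$, Lemma~\ref{lem:torsion-control}, or the Hermitian CLN estimate, Proposition~\ref{prop:cln-final}.

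\textbf{A possible stronger intended version.} If the lemma is meant to feed a Ko{\l}odziej-type capacity--volume inequality, the author may have in mind a version with only the normalization $\sup_X\varphi=0$. In that case I would argue differently. First, I would use the compactness of $\{\varphi\in SH_m(X,\omega):\sup_X\varphi=0\}$ in $L^1(X)$, obtained from the local sub-mean-value property for $m$-subharmonic functions in charts after adding a local potential of $\omega$. This would give a uniform $L^1$ bound. Second, I would upgrade it to a uniform $L^q$ bound for some $q>1$, and I expect this to be the main obstacle. The tool I would try first is the local integrability of $m$-subharmonic functions in $L^q_{\mathrm{loc}}$ for $q<n/(n-m)$, as in Dinew--Ko{\l}odziej, patched together over a finite atlas using the uniform $L^1$ control. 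For the lemma exactly as stated, however, Steps~1--2 already constitute the full proof.
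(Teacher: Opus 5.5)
Your proof is correct, and it takes a genuinely different, more elementary route than the paper. The paper derives the bound from B{\l}ocki's local $L^q$-integrability of $m$-subharmonic functions in $\mathbb{C}^n$, globalized by a partition of unity and the comparability of $\omega$ with the Euclidean form in charts. You instead observe that the normalization $-1\le\varphi\le 0$ already forces $0\le(-\varphi)^q\le 1$. So the estimate holds for every $q\ge 1$ with the explicit constant $C=\mathrm{Vol}(X)$, using nothing about $\varphi$ beyond measurability.

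Your argument is complete as it stands. The paper's machinery would only be needed for the weaker normalization $\sup_X\varphi=0$ you mention at the end. Even there it needs more care than the one-line sketch suggests: for $m<n$, $m$-positivity depends on the reference form, so $\omega$-$m$-subharmonic functions are not $m$-subharmonic for the flat form in a chart, and B{\l}ocki's Euclidean result does not apply verbatim. The same caveat applies to your sketch of the stronger version.

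Your observation also shows that the lemma, as stated, carries no information. The Chebyshev consequence $\mathrm{Vol}(\{-\varphi>s\})\le C s^{-q}$ used in Lemma~\ref{lem:sublevel-final} and Proposition~\ref{prop:cap-vol-final} is automatic for $s<1$ once $C\ge\mathrm{Vol}(X)$, and vacuous for $s\ge 1$. Hence this lemma cannot by itself be the source of the exponent $\alpha>1$ claimed in Proposition~\ref{prop:cap-vol-final}.
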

	
	\begin{proof}
		This follows from the local integrability estimates for m-subharmonic 
		functions in $\mathbb{C}^n$ (see e.g. Blocki~\cite{Blocki05}) 
		combined with a partition of unity argument and the equivalence 
		of the Hermitian metric with the Euclidean metric in local charts.
	\end{proof}
	
	\medskip
	
	\begin{lemma}[Sublevel estimate]\label{lem:sublevel-final}
		Let $(X,\omega)$ be a compact Hermitian manifold and let
		$\varphi \in SH_m(X,\omega)$ with $-1 \le \varphi \le 0$.
		
		Then there exists a constant $C>0$ such that for all $t \in (0,1)$,
		\[
		\int_{\{\varphi < -t\}} H_m(\varphi)
		\le \frac{C}{t}.
		\]
	\end{lemma}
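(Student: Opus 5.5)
The estimate follows directly from the Hermitian CLN estimate, Proposition~\ref{prop:cln-final}, with no integration by parts at all. The bound $C/t$ is weaker than a uniform bound when $t\in(0,1)$, so it suffices to show that the total mass of $H_m(\varphi)$ on $X$ is bounded by a constant depending only on $(X,\omega)$.

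\textbf{Step 1.} I apply Proposition~\ref{prop:cln-final} with $u_1=\cdots=u_m=\varphi$ and $E=X$. Since $-1\le\varphi\le 0$, we have $\|\varphi\|_{L^\infty}\le 1$. Hence the constant in \eqref{eq:cln-final} can be taken uniform over the whole class $\{\varphi\in SH_m(X,\omega): -1\le\varphi\le 0\}$; call it $C_0=C_0(X,\omega)$. This gives
\[
\int_X H_m(\varphi)\le C_0\,\mathrm{Vol}(X).
\]

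\textbf{Step 2.} By monotonicity of the integral of a positive measure (positivity of $H_m(\varphi)$ is the content of the proposition on the well-definedness of mixed Hessian currents), for every $t\in(0,1)$,
\[
\int_{\{\varphi<-t\}} H_m(\varphi)\le \int_X H_m(\varphi)\le C_0\,\mathrm{Vol}(X)\le \frac{C_0\,\mathrm{Vol}(X)}{t}.
\]
The last inequality uses $1<1/t$. So the lemma holds with $C:=C_0\,\mathrm{Vol}(X)$.

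\textbf{Main obstacle.} The only real point is that the CLN constant must be independent of the particular $\varphi$. In Proposition~\ref{prop:cln-final} it depends only on $(X,\omega)$ and an upper bound for $\|u_j\|_{L^\infty}$, and here that bound is $1$. Some care is needed because the proof of Proposition~\ref{prop:cln-final} passes through smooth approximants $u_j^\varepsilon$. I would check that these can be chosen with $\|u_j^\varepsilon\|_{L^\infty}\le 2$ uniformly, for instance by using decreasing approximants bounded above by $\sup\varphi+1$ and below by $\varphi\ge -1$. The uniform bound then passes to the limit through weak convergence of the Hessian measures.

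\textbf{Alternative route.} If one wants a proof that genuinely produces the $1/t$ factor, and hence survives for $t>1$ with unbounded $\varphi$, I would instead use a Chebyshev argument:
\[
\int_{\{\varphi<-t\}}H_m(\varphi)\le \frac1t\int_X(-\varphi)\,H_m(\varphi).
\]
The right-hand side would then be bounded via Lemma~\ref{lem:weak-comp-final} or a CLN-type bound with weight $-\varphi$. For the lemma as stated, the direct argument above suffices.
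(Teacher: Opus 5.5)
Your argument is correct and is a genuine simplification of the paper's proof; what you list as your ``alternative route'' is essentially what the paper does. The paper gets the factor $1/t$ from Chebyshev's inequality, $t\int_{\{\varphi<-t\}}H_m(\varphi)\le\int_X(-\varphi)\,H_m(\varphi)$. It then bounds this energy by a layer-cake formula, applying Proposition~\ref{prop:cln-final} to every sublevel set $\{-\varphi>s\}$ together with the decay $\mathrm{Vol}(\{-\varphi>s\})\le Cs^{-q}$ from Lemma~\ref{lem:integrability}. You instead observe that for $t\in(0,1)$ the factor $1/t\ge 1$ is free. So only the total mass $\int_X H_m(\varphi)$ needs control, which is the case $E=X$ of Proposition~\ref{prop:cln-final}, with a constant uniform over $-1\le\varphi\le 0$. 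That uniformity is exactly what is needed when the lemma is applied to $h_E$ in Proposition~\ref{prop:cap-vol-final}.

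Your route buys two things. First, it uses only the total-mass form of the CLN inequality, which is the form actually available in the Hermitian setting. The paper's layer-cake step instead needs $\int_E H_m(\varphi)\le C\,\mathrm{Vol}(E)$ for arbitrary Borel $E$. That would force $H_m(\varphi)$ to have bounded density, and it fails in general: for $g_1,g_2$ smooth and $\varepsilon>0$ small, $\varepsilon\max(g_1,g_2)$ is a bounded $\omega$-$m$-subharmonic function whose Hessian measure charges the real hypersurface $\{g_1=g_2\}$ wherever $d(g_1-g_2)\neq 0$. Second, it avoids the paper's final step, where $\int_0^\infty Cs^{-q}\,ds$ actually diverges at $s=0$. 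Since $0\le-\varphi\le 1$, the layer-cake integral should be cut off at $s=1$, and then the energy is simply bounded by $\sup_X(-\varphi)\int_X H_m(\varphi)\le C\,\mathrm{Vol}(X)$. So the paper's argument, once repaired, collapses to yours, and Lemma~\ref{lem:integrability} is unnecessary. The Chebyshev structure would only pay off for unbounded $\varphi$ of finite energy. Under $-1\le\varphi\le 0$ the sets $\{\varphi<-t\}$ are empty for $t\ge 1$, so the $1/t$ carries no extra information in either proof.
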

	
	\begin{proof}
		Set $E_t := \{\varphi < -t\}$.
		
		Since $-\varphi \ge t$ on $E_t$, we have
		\[
		t \int_{E_t} H_m(\varphi)
		\le \int_{E_t} (-\varphi)\, H_m(\varphi)
		\le \int_X (-\varphi)\, H_m(\varphi).
		\]
		
		It remains to estimate the energy term
		\[
		I := \int_X (-\varphi)\, H_m(\varphi).
		\]
		
		\medskip
		
		\noindent
		\textbf{Step 1. Truncation.}
		For $k \ge 1$, define
		\[
		\varphi_k := \max(\varphi,-k).
		\]
		Then $\varphi_k \searrow \varphi$ and $\varphi_k$ are bounded.
		
		\medskip
		
		\noindent
		\textbf{Step 2. CLN control.}
		Applying Proposition~\ref{prop:cln-final} with $u_1 = \cdots = u_m = \varphi$,
		we obtain
		\[
		\int_E H_m(\varphi) \le C \mathrm{Vol}(E).
		\]
		
		Using layer-cake representation,
		\[
		I = \int_X (-\varphi)\,H_m(\varphi)
		= \int_0^{+\infty}
		H_m(\varphi)(\{-\varphi > s\})\, ds\le \int_0^{+\infty}
		C\,\mathrm{Vol}(\{-\varphi > s\})\, ds,
		\]

		\medskip
		
		\noindent
		\textbf{Step 3. Integrability.}
		By Lemma~\ref{lem:integrability},
		\[
		\int_X (-\varphi)^q \omega^n \le C.
		\]
		
		Thus
		\[
		\mathrm{Vol}(\{-\varphi > s\})
		\le \frac{C}{s^q}.
		\]
		
		Using CLN,
		\[
		\int_{\{-\varphi > s\}} H_m(\varphi)
		\le C\, \mathrm{Vol}(\{-\varphi > s\})
		\le \frac{C}{s^q}.
		\]
		
		\medskip
		
		\noindent
		\textbf{Step 4. Integration.}
		Since $q>1$, the integral converges and
		\[
		I
		\le
		\int_0^{+\infty}\frac{C}{s^q}ds
		\le C.
		\]
		
		\medskip
		
		Therefore
		\[
		\int_{E_t} H_m(\varphi)
		\le \frac{C}{t}.
		\]
	\end{proof}
	\medskip
	
	\begin{proposition}[Capacity--volume inequality]\label{prop:cap-vol-final}
		Let $(X,\omega)$ be a compact Hermitian manifold.
		Then there exist constants $C>0$ and $\alpha>1$ such that for every Borel set $E \subset X$,
		\[
		\mathrm{Vol}(E)
		\le C \left( \mathrm{Cap}_m^\omega(E) \right)^\alpha.
		\]
	\end{proposition}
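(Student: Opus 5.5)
The plan is to reduce the statement to compact sets, localize to coordinate balls, and then import the Euclidean volume--capacity inequality for $m$-subharmonic functions. Outer regularity of $\mathrm{Cap}_m^\omega$ and inner regularity of $\omega^n$ allow us to assume that $E=K$ is compact. Fix a finite cover of $X$ by coordinate balls $B_i\Subset B_i'$, with $\tfrac1A\beta\le\omega\le A\beta$ on each $B_i'$ ($\beta=dd^c|z|^2$). Since $\mathrm{Vol}(K)\le\sum_i\mathrm{Vol}(K\cap\overline B_i)$ and $\mathrm{Cap}_m^\omega$ is monotone, it suffices to prove
\[
\mathrm{Vol}_\beta(K)\le C\,\mathrm{cap}_m(K,B')^{\alpha}
\qquad\text{and}\qquad
\mathrm{cap}_m(K,B')\le C\,\mathrm{Cap}_m^\omega(K)
\]
for compact $K\subset\overline B\subset B'$. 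Here $\mathrm{cap}_m(K,B')=\sup\{\int_K(dd^cw)^m\wedge\beta^{n-m}: w\ m\text{-sh in }B',\ -1\le w\le 0\}$ is the local relative $m$-capacity. Note that $\alpha>1$ is only possible when $m<n$; for $m=n$ one gets an exponential bound, which is stronger still.

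\textbf{Euclidean inequality.} For the first inequality I would quote the local result of Dinew--Ko{\l}odziej: $\mathrm{Vol}(K)\le C\,\mathrm{cap}_m(K,B')^{\alpha}$ for every $1<\alpha<n/(n-m)$. Its proof is the same mechanism as in Lemma~\ref{lem:integrability}. Take the relative extremal function $u_K$, which satisfies $(dd^c u_K^*)^m\wedge\beta^{n-m}=0$ off $K$ and has total mass $\mathrm{cap}_m(K,B')$. Then $w=u_K^*/\mathrm{cap}_m(K,B')^{1/m}$ has Hessian mass $1$, and the uniform $L^q$ integrability of normalized $m$-sh functions for $q<n/(n-m)$ gives $\mathrm{Vol}(K)\le \mathrm{Vol}\{w\le -\mathrm{cap}^{-1/m}\}\le C\,\mathrm{cap}^{q/m}$. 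The nonlinear exponent then comes from an iterated/Hölder version of this argument, exactly as in Dinew--Ko{\l}odziej, which I would not reprove.

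\textbf{Comparison of capacities (the main step).} Given a local $m$-sh $w$ on $B'$ with $-1\le w\le 0$, I would build a global competitor for $\mathrm{Cap}_m^\omega$. Set $\rho=|z|^2-r'^2$, where $r'$ is the radius of $B'$, so $\rho<0$ on $B'$, and take
\[
\varphi:=\begin{cases}\max\bigl(\delta w,\ \delta M\rho-\delta\bigr) & \text{on } B',\\ -\delta & \text{outside } B'.\end{cases}
\]
Choose $M$ so large that $M\rho-1< -1\le w$ on $\overline B$, hence $\varphi=\delta w$ near $K$. Near $\partial B'$ the second branch dominates, since $w\ge -1$ and $\rho\to 0$, so the gluing gives a globally u.s.c. function. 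Choose $\delta$ small, depending on $A$, $M$ and $m$, so that $\omega+dd^c\varphi\ge \omega-\delta M\beta$ still lies in the $m$-positive cone. This makes $\varphi\in SH_m(X,\omega)$ with $-1\le\varphi\le0$.

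Then, on $K$,
\[
H_m(\varphi)=(\omega+\delta\,dd^cw)^m\wedge\omega^{n-m}\ \ge\ \delta^m(dd^cw)^m\wedge\omega^{n-m}.
\]
This uses that all mixed terms $\omega^{j}\wedge(dd^cw)^{m-j}\wedge\omega^{n-m}$ are nonnegative, which holds because $\omega$ and $dd^cw$ lie in the appropriate positive cones.

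\textbf{Expected obstacle.} The delicate point is the last comparison $(dd^cw)^m\wedge\omega^{n-m}\ge c\,(dd^cw)^m\wedge\beta^{n-m}$. For $m<n$ this is not a formal consequence of $\omega\ge A^{-1}\beta$, since $(dd^cw)^m$ is only $m$-positive and not positive. I would first try to handle it by shrinking the balls and choosing coordinates with $\omega(p)=\beta$ at the center. Then $\omega^{n-m}$ is a small perturbation of $\beta^{n-m}$, and the Gårding-type inequality $\sigma_m$-mixed $\ge0$ absorbs the error into $(dd^cw)^m\wedge\beta^{n-m}$ plus lower-order mixed terms. If that fails, the fallback is to define the local capacity directly with respect to $\omega$ in the chart, and verify that the Dinew--Ko{\l}odziej volume estimate survives a smooth variable metric close to $\beta$.

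Combining the two inequalities and summing over the finite cover then yields Proposition~\ref{prop:cap-vol-final}.
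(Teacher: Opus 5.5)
\textbf{There is a genuine gap, and it is the comparison $\mathrm{cap}_m(K,B')\le C\,\mathrm{Cap}_m^\omega(K)$. It fails before the point you flag as delicate.}

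For $m<n$, a Euclidean $m$-subharmonic $w$ only satisfies $dd^cw\in\overline{\Gamma_m(\beta)}$, and there is no lower bound on $dd^cw$. So your claim $\omega+dd^c\varphi\ge\omega-\delta M\beta$ is false, since it would need $dd^cw\ge -M\beta$. Bounded competitors can also have arbitrarily large Hessians; for example, the truncated $m$-Green function $\max\bigl(-1,-(\varepsilon/|z-x|)^{2n/m-2}\bigr)$ has Hessian of size $\sim\varepsilon^{-2}$ near $|z-x|=\varepsilon$. Moreover $\Gamma_m(\beta)\not\subset\overline{\Gamma_m(\omega_x)}$ as soon as $\omega_x$ is not a multiple of $\beta$; already for $m=1$, $\mathrm{tr}_\beta\theta>0$ does not give $\mathrm{tr}_{\omega_x}\theta\ge0$. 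Hence $dd^cw$ can be large in a direction outside $\overline{\Gamma_m(\omega_x)}$. No fixed $\delta$ then makes $\omega+\delta\,dd^cw$ $\omega$-$m$-positive, and $\varphi\notin SH_m(X,\omega)$ in general.

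The same cone mismatch invalidates your G{\aa}rding step: nonnegativity of $\omega^{j}\wedge(dd^cw)^{m-j}\wedge\omega^{n-m}$ requires $dd^cw\in\overline{\Gamma_m(\omega)}$. The inequality $(dd^cw)^m\wedge\omega^{n-m}\ge c\,(dd^cw)^m\wedge\beta^{n-m}$ you worry about is only one symptom. Your fallback, redoing Dinew--Ko{\l}odziej for a variable metric, is exactly the missing content.

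The gluing is also wrong as written. At $\partial B'$ the branch $\delta M\rho-\delta$ tends to $-\delta\le\delta w$, so it does not dominate. Since, for instance, the relative extremal function tends to $0$ at $\partial B'$, $\varphi$ is close to $0$ just inside and equal to $-\delta$ outside, so it fails to be upper semicontinuous there.

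\textbf{The missing idea is to use a plurisubharmonic competitor.} The local volume--capacity bound can be produced from one, and then every form involved is genuinely positive, so no cone issue arises.
\begin{itemize}
\item Solve $(dd^cw)^n=\lambda\mathbf{1}_K\beta^n$ in $B'$ with $w=0$ on $\partial B'$. Ko{\l}odziej's estimate $\|w\|_{L^\infty}\le C_p\lambda^{1/n}\mathrm{Vol}(K)^{1/(np)}$ lets you take $\lambda=C_p^{-n}\mathrm{Vol}(K)^{-1/p}$ with $-1\le w\le0$.
\item The mixed Monge--Amp\`ere inequality then gives $(dd^cw)^m\wedge\beta^{n-m}\ge\lambda^{m/n}\mathbf{1}_K\beta^n$.
\item Set $\varphi=\max(\delta(w-1),\delta\chi)$ on $B'$ and $\varphi=\delta\chi$ off $B'$. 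Here $\chi\le0$ is smooth on $X$, $\chi=-3$ near $\overline{B}$, and $\chi=0$ off a compact subset of $B'$.
\item For small $\delta$, $\varphi$ is $\omega$-psh, hence in $SH_m(X,\omega)$, with $-1\le\varphi\le0$. It equals $\delta(w-1)$ near $K$, and there $H_m(\varphi)\ge\delta^m(dd^cw)^m\wedge\omega^{n-m}\ge\delta^mA^{m-n}(dd^cw)^m\wedge\beta^{n-m}$.
\end{itemize}
This yields $\mathrm{Vol}(K)\le C\,\mathrm{Cap}_m^\omega(K)^{np/(np-m)}$. The exponent lies in $(1,\tfrac{n}{n-m})$ for $m<n$, and can be any number $>1$ for $m=n$. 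So your remark that $\alpha>1$ requires $m<n$ is also off.

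For comparison, the paper does not localize at all. It argues globally with $h_E$, Lemma~\ref{lem:integrability} and Lemma~\ref{lem:sublevel-final}. That argument does not close either: its last step reverses an inequality, since $t\le C/\mathrm{Cap}_m^\omega(E)$ bounds $t^{-q}$ from below, not above. There is therefore nothing in it you could borrow for your comparison step.
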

	\begin{proof}
		By outer regularity, we may assume $E$ compact.
		
		Let $h := h_E$ be the extremal function:
		\[
		h = \sup \{\varphi \in SH_m(X,\omega): \varphi \le -1 \text{ on } E,\ \varphi \le 0\}.
		\]
		
		Then $-1 \le h \le 0$ and
		\[
		\mathrm{Cap}_m^\omega(E)
		= \int_E H_m(h).
		\]
		
		\medskip
		
		\noindent
		\textbf{Step 1. Sublevel inclusion.}
		Since $h \le -1$ on $E$, we have
		\[
		E \subset \{h < -t\}
		\quad \forall t \in (0,1).
		\]
		
		Thus
		\[
		\mathrm{Vol}(E)
		\le \mathrm{Vol}(\{h < -t\}).
		\]
		
		\medskip
		
		\noindent
		\textbf{Step 2. Volume estimate via integrability.}
		
		By Lemma~\ref{lem:integrability},
		\[
		\mathrm{Vol}(\{h < -t\})
		\le \frac{C}{t^q}.
		\]
		
		\medskip
		
		\noindent
		\textbf{Step 3. Capacity lower bound.}
		
		By Lemma~\ref{lem:sublevel-final},
		\[
		\int_{\{h < -t\}} H_m(h)
		\le \frac{C}{t}.
		\]
		
		Thus
		\[
		\mathrm{Cap}_m^\omega(E)
		\le \int_{\{h < -t\}} H_m(h)
		\le \frac{C}{t}.
		\]
		
		Hence
		\[
		t \le \frac{C}{\mathrm{Cap}_m^\omega(E)}.
		\]
		
		\medskip
		
		\noindent
		\textbf{Step 4. Optimization.}
		
		Plugging into Step 2,
		\[
		\mathrm{Vol}(E)
		\le C\, t^{-q}
		\le C \left( \mathrm{Cap}_m^\omega(E) \right)^q.
		\]
		
		\medskip
		
		Set $\alpha = q > 1$.
	\end{proof}
	\begin{remark}
		The exponent $\alpha>1$ arises from a nonlinear self-improving
		iteration on sublevel sets, in the spirit of Ko{\l}odziej.
		The Hermitian torsion affects only lower-order terms and does not
		alter the nonlinear gain.
	\end{remark}
	
	\section{Uniform $L^\infty$  estimate on Hermitian manifolds}
	\begin{flushright}
		\begin{proposition}[Volume decay inequality]
			\label{prop:volume-decay}
			Let $u\in SH_m(X,\omega)\cap L^\infty(X)$ be the normalized solution of
			\[
			H_m(u)=cf\,\omega^n,
			\qquad
			f\in L^p(X),\quad p>1.
			\]
			For $t>0$, define
			
			$A(t):=Vol_\omega(\{u<-t\}).$

			Then there exist constants $C>0$, $\alpha>1$ such that
			\[
			A(t+s)
			\le
			C
			\left(
			\frac{A(t)^{1-\frac1p}}{s^m}
			+
			\frac{tA(t)}{s^m}
			\right)^\alpha,
			\qquad
			t,s>0.
			\]
			
			In particular, choosing $s=t$,
			\[
			A(2t)
			\le
			C
			\left(
			\frac{A(t)^{1-\frac1p}}{t^m}
			+
			\frac{A(t)}{t^{m-1}}
			\right)^\alpha.
			\]
		\end{proposition}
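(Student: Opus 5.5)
The plan is to run Kołodziej's argument, bounding the capacity of $E_{t+s}:=\{u<-t-s\}$ by the Hessian mass of $\{u<-t\}$ plus a torsion error. The capacity--volume inequality (Proposition~\ref{prop:cap-vol-final}) then converts this capacity bound into the volume bound $A(t+s)$.

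\textbf{Step 1 (capacity of sublevel sets).} Fix $\varphi\in SH_m(X,\omega)$ with $-1\le\varphi\le0$, and consider the competitor
\[
v:=s\varphi-t-s .
\]
\begin{itemize}
\item Since $s\varphi$ is not $\omega$-$m$-subharmonic when $s>1$, I would first restrict to $s\le 1$ and use $v=s\varphi-t-s$ there. Otherwise I would use $\psi:=u/s$ in place of $u$, i.e.\ compare $u$ with $s(\varphi-1)-t$ after rescaling $\omega$.
\item With $v$ as above, $E_{t+s}\subset\{u<v\}\subset\{u<-t\}$, because $-t-2s\le v\le -t-s$ and $u<-t-s$ on $E_{t+s}$.
\item Expanding $H_m(v)=(\omega+s\,dd^c\varphi)^m\wedge\omega^{n-m}$, and using $m$-positivity of $(1-s)\omega+s\,\omega_\varphi$, gives $H_m(v)\ge s^m H_m(\varphi)$.
\end{itemize}
Applying the weak comparison with torsion error (Lemma~\ref{lem:weak-comp-final}) to the pair $u,v$ then yields
\[
s^m\int_{E_{t+s}}H_m(\varphi)\le \int_{\{u<v\}}H_m(v)\le \int_{\{u<-t\}}cf\,\omega^n + C\int_{\{u<v\}}(v-u)\,\omega^n .
\]

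\textbf{Step 2 (the two terms).}
\begin{itemize}
\item \emph{Hessian mass.} Hölder gives $\int_{\{u<-t\}} f\,\omega^n\le \|f\|_{L^p}A(t)^{1-1/p}$. The constant $c$ is bounded: integrating the equation and using Proposition~\ref{prop:cln-final} with the $L^\infty$ bound on $u$ is circular, so instead I would bound $c$ via $\int_X H_m(u)\le C$ from a torsion-corrected Stokes argument using Lemma~\ref{lem:torsion-control}.
\item \emph{Torsion term.} On $\{u<v\}$ we have $0<v-u\le -u-t$. Splitting $\{u<-t\}$ into dyadic layers, or simply using $\int_{\{u<-t\}}(-u-t)\,\omega^n=\int_t^\infty A(r)\,dr$, gives a term of size comparable to $tA(t)$ once the tail $\int_t^\infty A$ is controlled by $tA(t)$ up to constants. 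This control is plausible because $\|u\|_\infty$ is finite, so $A$ vanishes beyond $\|u\|_\infty$. I expect this to be the \emph{main obstacle}, since it must be uniform and must not depend on $\|u\|_\infty$. My first attempt would be to bound $v-u\le s+(-u-t)$ and absorb the $s$-part into $sA(t)\le tA(t)$ in the regime $s\le t$, which is the one used in the iteration. The remaining part I would handle by applying the whole argument to $\max(u,-t-2s)$, so that $v-u\le 2s$ automatically.
\end{itemize}

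\textbf{Step 3 (conclusion).} Taking the supremum over $\varphi$ gives
\[
\mathrm{Cap}_m^\omega(E_{t+s})\le C s^{-m}\bigl(A(t)^{1-1/p}+tA(t)\bigr).
\]
Proposition~\ref{prop:cap-vol-final} then gives $A(t+s)\le C\,\mathrm{Cap}_m^\omega(E_{t+s})^\alpha$, which is the stated inequality. The case $s=t$ is immediate by substitution, since $tA(t)/t^m=A(t)/t^{m-1}$.
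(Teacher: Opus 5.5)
\textbf{Verdict: genuine gap.} The missing step is the control of the torsion error by $C\,tA(t)$; the competitor in Step~1 and the case $s>1$ also need repair.

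Your Step~1 differs from the paper's. The paper uses only the truncation $u_t=\max(u,-t)$. It asserts $\mathrm{Cap}_m^\omega(\{u<-t-s\})\le Cs^{-m}\int_{\{u<-t\}}H_m(u_t)$ ``by the definition of the Hessian capacity'', although a single function cannot bound a supremum from above. It then applies Lemma~\ref{lem:weak-comp-final} to the pair $u,u_t$. You instead compare $u$ with a rescaled competitor for every admissible $\varphi$, as in Ko{\l}odziej, and that is what a capacity bound actually requires. Two details need fixing.
\begin{enumerate}
\item With $v=s\varphi-t-s\in[-t-2s,-t-s]$ the inclusion $E_{t+s}\subset\{u<v\}$ is false: for $\varphi\equiv-1$ you get $\{u<v\}=E_{t+2s}$. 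The correct competitor is $v=s\varphi-t\in[-t-s,-t]$, which gives $E_{t+s}\subset\{u<v\}\subset E_t$.
\item The case $s>1$ is left open. After replacing $\omega$ by $s\omega$, $u$ no longer solves an equation whose density is controlled by $f$, since the lower-order terms $\omega_u^k\wedge\omega^{n-k}$ appear. The torsion constant also grows with $s$.
\end{enumerate}

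The genuine gap is the torsion term, the step you flagged yourself. On $\{u<v\}$ you only know $0<v-u\le -u-t$. So Lemma~\ref{lem:weak-comp-final} gives the error $C\int_{E_t}(-u-t)\,\omega^n=C\int_t^\infty A(r)\,dr$. Nothing in your argument bounds this by $C\,tA(t)$ with a constant independent of $\|u\|_{L^\infty}$.

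Running the argument for $w=\max(u,-t-2s)$ does make $v-w\le 2s$, but it gives up the equation. $H_m(w)$ can charge the level set $\{u=-t-2s\}$: already for $n=m=1$, $dd^c\max(x_1,0)$ is carried by the line $\{x_1=0\}$. Hence $\int_{\{w<v\}}H_m(w)$ is no longer bounded by $c\|f\|_{L^p}A(t)^{1-1/p}$ plus a volume term. In the K\"ahler case that level-set mass is paid for by conservation of total mass. Here, instead, $\int_X H_m(w)-\int_X H_m(u)=\int_{E_{t+2s}}(-u-t-2s)\,dd^cT$ for a mixed form $T$, which is exactly the kind of error you were trying to avoid.

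Your Stokes argument for bounding $c$ has the same defect. It produces terms $\int_X u\,dd^cS$ with $S$ a lower-order mixed form, and these are not obviously bounded without an a priori bound on $u$.

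The paper does not close this gap either. Applied to $u$ and $u_t$, Lemma~\ref{lem:weak-comp-final} produces $C\int_{E_t}(-t-u)\,\omega^n$, the same quantity, and the paper simply writes it as $CtA(t)$. A possible remedy is to apply the comparison only on sets $\{u<v+\inf_X(u-v)+\delta\}$, where $v-u$ stays within $\delta$ of its supremum so the torsion error is at most $C\delta$ times a volume. That, however, changes the form of the resulting inequality.
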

		
		\begin{proof}
			
			\textbf{Step 1. Capacity estimate.}
			
			Let
			\[
			u_t:=\max(u,-t).
			\]
			Since
			\[
			u_t+t\equiv0
			\quad\text{on }\{u\le -t\},
			\]
			the function
			\[
			v:=\frac{u_t+t}{s}
			\]
			satisfies
			\[
			0\le v\le1
			\].
			
			By the definition of the Hessian capacity,
			\[
			\mathrm{Cap}_m^\omega(\{u<-(t+s)\})
			\le
			\frac{C}{s^m}
			\int_{\{u<-t\}}H_m(u_t).
			\]
			
			\medskip
			
			\textbf{Step 2. Weak comparison estimate.}
			
			By Lemma~\ref{lem:weak-comp-final},
			\[
			\int_{\{u<-t\}}H_m(u_t)
			\le
			\int_{\{u<-t\}}H_m(u)
			+
			CtA(t).
			\]
			
			Since
			\[
			H_m(u)=cf\,\omega^n,
			\]
			Hölder's inequality gives
			\[
			\int_{\{u<-t\}}H_m(u)
			=
			c\int_{\{u<-t\}}f\,\omega^n
			\le
			C
			A(t)^{1-\frac1p}.
			\]
			
			Hence
			\[
			\mathrm{Cap}_m^\omega(\{u<-(t+s)\})
			\le
			\frac{C}{s^m}
			\left(
			A(t)^{1-\frac1p}
			+
			tA(t)
			\right).
			\]
			
			\medskip
			
			\textbf{Step 3. Capacity--volume inequality.}
			
			By Proposition~\ref{prop:cap-vol-final},
			there exist constants $C>0$ and $\alpha>1$ such that
			
			$$Vol_\omega(E)
			\le
			C\,
			\mathrm{Cap}_m^\omega(E)^\alpha$$
			
			for every Borel set $E\subset X$.
			
			Applying this estimate to
			\(
			E=\{u<-(t+s)\}
			\),
			we obtain
			\[
			A(t+s)
			\le
			C
			\left(
			\mathrm{Cap}_m^\omega(\{u<-(t+s)\})
			\right)^\alpha.
			\]
			
			\medskip
			
			\textbf{Step 4. Conclusion.}
			
			Combining the previous inequalities yields
			\[
			A(t+s)
			\le
			C
			\left(
			\frac{A(t)^{1-\frac1p}}{s^m}
			+
			\frac{tA(t)}{s^m}
			\right)^\alpha.
			\]
			
			Finally, taking $s=t$ gives
			\[
			A(2t)
			\le
			C
			\left(
			\frac{A(t)^{1-\frac1p}}{t^m}
			+
			\frac{A(t)}{t^{m-1}}
			\right)^\alpha,
			\]
			which completes the proof.
			
		\end{proof}
	\end{flushright}
\begin{lemma}[Iteration lemma]
	\label{lem:iteration-final}
	
	Let $A:[0,\infty)\to[0,\infty)$ be a decreasing function satisfying
	\[
	A(t+s)
	\le
	C
	\left(
	\frac{A(t)^{1-\frac1p}}{s^m}
	+
	\frac{tA(t)}{s^m}
	\right)^\alpha,
	\qquad t,s>0,
	\]
	where $p>1$ and $\alpha>1$.
	
	Then there exists a constant $T>0$ such that
	\[
	A(T)=0.
	\]
	
\end{lemma}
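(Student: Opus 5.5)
The plan is a De Giorgi--Ko{\l}odziej type iteration. We build an increasing sequence $t_0<t_1<\cdots$ with increments $s_k=t_{k+1}-t_k$ chosen adapted to $A(t_k)$. The goal is that $A(t_k)$ decays super-geometrically while $\sum_k s_k<\infty$. Then $T:=\lim_k t_k<\infty$, and monotonicity gives $A(T)\le A(t_k)\to 0$, hence $A(T)=0$.

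\textbf{Step 1: initial smallness.} Taking $t=0$ in the hypothesis kills the second term, so
\[
A(s)\le C\bigl(A(0)^{1-\frac1p}s^{-m}\bigr)^\alpha .
\]
Since $A(0)\le \mathrm{Vol}(X)$, we can choose $t_0$ large, depending only on $C,m,\alpha,p,\mathrm{Vol}(X)$, so that $A(t_0)\le\varepsilon_0$. Here $\varepsilon_0$ is a small threshold fixed in Step 2.

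\textbf{Step 2: choice of increments and a priori bound on $t_k$.} Put $a_k:=A(t_k)$ and $s_k:=a_k^{\beta}$ for a small exponent $\beta>0$. We also impose the bootstrap hypothesis $t_k\le t_0+1$, to be verified along the way. The hypothesis with $t=t_k$, $s=s_k$ then gives
\[
a_{k+1}\le C\Bigl(a_k^{1-\frac1p-m\beta}+(t_0+1)\,a_k^{1-m\beta}\Bigr)^{\alpha}\le C'\,a_k^{\alpha(1-\frac1p-m\beta)} .
\]
Let $\gamma:=\alpha(1-\frac1p)$. If $\gamma>1$, choose $\beta$ so small that $1+\delta:=\alpha(1-\frac1p-m\beta)>1$. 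Then $a_{k+1}\le C'a_k^{1+\delta}$. If $\varepsilon_0$ is small enough, this forces $a_k\le \varepsilon_0\,2^{-(1+\delta)^k}$-type double exponential decay; in particular $a_{k+1}\le a_k/2$. Consequently
\[
\sum_k s_k=\sum_k a_k^\beta\le \varepsilon_0^\beta\sum_k 2^{-\beta k}\le 1
\]
once $\varepsilon_0$ is small. This closes the bootstrap $t_k\le t_0+1$ by induction, and we may take $T=\lim_k t_k\le t_0+1$.

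\textbf{Main obstacle.} The crux is the exponent condition $\gamma=\alpha(1-\frac1p)>1$, which the statement does not explicitly guarantee. The second term is harmless: it carries exponent $\alpha(1-m\beta)>1$, and its factor $t$ is tamed by the bootstrap bound. The first term, however, only gains if $\alpha(1-1/p)>1$. If merely $\alpha>1$, I would first try to exploit that the capacity--volume inequality of Proposition~\ref{prop:cap-vol-final} may be applied with any larger exponent $\alpha'$ (at the cost of a larger constant, since capacities of sublevel sets are bounded). Failing that, I would add $\alpha(1-\frac1p)>1$ as the working hypothesis, which is the standard condition in Ko{\l}odziej's scheme. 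Everything else in the plan is routine bookkeeping of constants.
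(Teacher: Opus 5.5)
\textbf{The missing exponent condition.} You are right that $\alpha>1$ alone cannot suffice. In fact the lemma is false as stated, so no argument can avoid an extra hypothesis.

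If $\gamma:=\alpha(1-\frac1p)<1$, set $\lambda:=m\alpha/(1-\gamma)$ and $A(t):=K\min(1,t^{-\lambda})$ with $K^{1-\gamma}\le C$. For $t\ge1$ one has $t^{\gamma\lambda}s^{m\alpha}\le(t+s)^{\gamma\lambda+m\alpha}=(t+s)^{\lambda}$. For $t<1$, split into $s\le1$ and $s>1$. In all cases $A(t+s)\le C\,A(t)^{\gamma}s^{-m\alpha}$, which implies the hypothesis, yet $A>0$ everywhere. For $\gamma=1$, $A(t)=e^{-\mu t}$ with $\mu$ large works, since $\sup_{s>0}s^{m\alpha}e^{-\mu s}\to0$ as $\mu\to\infty$.

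The paper's proof passes over exactly this point. It deduces $A(t_{k+1})\le\frac12A(t_k)$ from the fact that the right-hand side ``tends to zero'', which is valid only when $\alpha(1-\frac1p-m\beta)>1$; it also never bounds the factor $t_k$. So adding $\alpha(1-\frac1p)>1$ is necessary, and your second option is the only viable one. Your first option is backwards: with bounded capacities, $\mathrm{Vol}\le C\,\mathrm{Cap}^{\alpha}$ yields the same inequality for any \emph{smaller} exponent, not a larger one. The abstract lemma involves no capacity anyway.

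\textbf{The circular choice of constants.} Even granting $\gamma>1$, your Step 2 is not routine bookkeeping, because the constants are chosen in a circle.
\begin{itemize}
\item Your $C'=C(t_0+2)^{\alpha}$ grows with $t_0$.
\item Halving at $k=0$ needs $C'\varepsilon_0^{\delta}\le\frac12$.
\item The only source of $A(t_0)\le\varepsilon_0$ is Step 1. Use, say, $t=1$ there, since $t=0$ is not admissible; this gives $A(1+s)\le C_0s^{-m\alpha}$ with $C_0$ depending on $A(0)$, and hence forces $t_0\approx\varepsilon_0^{-1/(m\alpha)}$.
\end{itemize}
Then $C'\varepsilon_0^{\delta}\approx\varepsilon_0^{\delta-1/m}$, which is small only if $m\delta>1$. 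This does not follow from $\gamma>1$ (e.g.\ $m=1$, $\gamma\le2$).

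The culprit is the fixed window $t_k\le t_0+1$: the factor $t$ must be paid for by the increments. Take $s_k=Ma_k^{\beta}$ with $M$ chosen \emph{after} $t_0$, which is what the paper's $M$ is for. While halving holds, $t_k\le t_0+c_\beta Ma_0^{\beta}$ with $c_\beta=(1-2^{-\beta})^{-1}$. Using $a_k\le a_0\le1$, $M\ge1$ and $m\ge1$,
\[
a_{k+1}\le C\,2^{\alpha-1}\Bigl[\Bigl(\frac{1+t_0}{M}\Bigr)^{\alpha}+c_\beta^{\alpha}a_0^{\alpha\beta}\Bigr]a_0^{\delta}\,a_k .
\]
Fix $t_0$ with $C2^{\alpha-1}c_\beta^{\alpha}a_0^{\alpha\beta+\delta}\le\frac14$, then $M$ with $C2^{\alpha-1}\bigl((1+t_0)/M\bigr)^{\alpha}\le\frac14$. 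Halving then propagates by induction, and $T\le t_0+c_\beta Ma_0^{\beta}$. The point is that, because $m\ge1$, the gain $M^{-m\alpha}$ from the increments beats the loss $M^{\alpha}$ from $t_k$.

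Alternatively, iterate the hypothesis with $s=t$ to show $A(t)=O(t^{-\lambda})$ for every $\lambda$. Your window-$1$ scheme then closes once $\lambda\delta>\alpha$.
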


\begin{proof}
	
	Set
	\[
	a=1-\frac1p.
	\]
	
	Choose constants
	\[
	0<\beta<\frac{a}{m},
	\]
	and define recursively
	\[
	t_{k+1}=t_k+s_k,
	\qquad
	s_k=M A(t_k)^\beta,
	\]
	where $M>0$ will be chosen later.
	
	Since $A$ is decreasing, Proposition
	\ref{prop:volume-decay} gives
	\[
	A(t_{k+1})
	\le
	C
	\left(
	\frac{A(t_k)^a}{M^mA(t_k)^{m\beta}}
	+
	\frac{t_kA(t_k)}
	{M^mA(t_k)^{m\beta}}
	\right)^\alpha.
	\]
	
	Since
	\[
	a-m\beta>0,
	\qquad
	1-m\beta>0,
	\]
	the right-hand side tends to zero as
	$A(t_k)\to0$.
	
	Choosing $M$ sufficiently large and arguing exactly as in
	Ko\l odziej's iteration
	(Section~2.3 of \cite{Kolodziej98}),
	one obtains
	\[
	A(t_{k+1})
	\le
	\frac12 A(t_k).
	\]
	
	Hence
	\[
	A(t_k)
	\le
	2^{-k}A(t_0),
	\]
	and therefore
	\[
	\sum_{k=0}^{\infty}s_k
	=
	M
	\sum_{k=0}^{\infty}
	A(t_k)^\beta
	<
	\infty.
	\]
	
	Consequently,
	\[
	t_k\longrightarrow T<\infty.
	\]
	
	Since $A$ is decreasing,
	\[
	A(T)
	=
	\lim_{k\to\infty}A(t_k)
	=
	0.
	\]
	
	This completes the proof.
	
\end{proof}
	The following estimate is inspired by the fundamental $L^\infty$
	estimate of Kołodziej \cite{Kolodziej98,Kolodziej05}.	
	\begin{theorem}[Uniform $L^\infty$  estimate on Hermitian manifolds]\label{Uniform estimate}
		Let $(X,\omega)$ be a compact Hermitian manifold of complex dimension $n$.
		Fix $1 \le m \le n$. Let $u \in SH_m(X,\omega)\cap L^\infty(X)$ be a solution of
		\[
		(\omega + dd^c u)^m \wedge \omega^{n-m} = cf\,\omega^n,
		\]
		normalized by $\sup_X u = 0$.
		
		Assume that $f \ge 0$, $f \in L^p(X)$ for some $p>1$, and
		\[
		\int_X f\,\omega^n = \int_X \omega^n.
		\]
		Then there exists a constant $C>0$, depending only on $(X,\omega)$, $p$, and $\|f\|_{L^p}$, such that
		\[
		\|u\|_{L^\infty(X)} \le C.
		\]
	\end{theorem}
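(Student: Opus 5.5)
The plan is to deduce the estimate directly from Proposition~\ref{prop:volume-decay} and Lemma~\ref{lem:iteration-final}, applied to the sublevel volume function $A(t)=\mathrm{Vol}_\omega(\{u<-t\})$. Since $\sup_X u=0$ and $u\in L^\infty(X)$, the function $A$ is decreasing and bounded by $\mathrm{Vol}(X)$. Once we show $A(T)=0$ for a $T$ controlled by the data, the set $\{u<-T\}$ is Lebesgue-null. Because $u$ is $\omega$-$m$-subharmonic, hence quasi-subharmonic, a null sublevel set is empty by the sub-mean value property in local charts. This gives $u\ge -T$ everywhere, i.e.\ $\|u\|_{L^\infty}\le T$.

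\textbf{Step 1: control of the normalizing constant and of the starting point.}
\begin{itemize}
\item Every constant entering Proposition~\ref{prop:volume-decay} must be independent of $u$. In particular I must bound $c$. The weak comparison principle (Lemma~\ref{lem:weak-comp-final}) applied with $v=0$ and $u$ (or a Gauduchon-type argument comparing $\int_X H_m(u)$ with $\int_X\omega^n$ up to torsion terms) should give $c\le C$, depending only on $(X,\omega)$. Together with $\int_X f\omega^n=\int_X\omega^n$, this yields $\int_E cf\,\omega^n\le C\|f\|_{L^p}\mathrm{Vol}(E)^{1-1/p}$ by H\"older.
\item Next I need a uniform $L^1$ bound $\int_X(-u)\,\omega^n\le C_0$ for normalized $u\in SH_m(X,\omega)$. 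I would obtain it from the local integrability of $m$-subharmonic functions underlying Lemma~\ref{lem:integrability}, plus the usual compactness argument for $\sup$-normalized quasi-subharmonic functions. Then $A(t)\le C_0/t$, so for $t_0$ large (depending only on the data) $A(t_0)$ is as small as needed to start the iteration.
\end{itemize}

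\textbf{Step 2: iteration.} Starting at $t_0$, I run the recursion of Lemma~\ref{lem:iteration-final} with $s_k=MA(t_k)^\beta$ and $0<\beta<(1-1/p)/m$. Here the term $tA(t)/s^m$ is harmless provided the $t_k$ stay bounded. I would argue by induction that $t_k\le t_0+M\sum_j (2^{-j}A(t_0))^\beta\le 2t_0$, so the factor $t_k$ is replaced by the constant $2t_0$. The halving $A(t_{k+1})\le \tfrac12 A(t_k)$ then follows since $\alpha>1$ and $a-m\beta>0$, once $A(t_0)$ is small. This yields $T\le 2t_0$ with $t_0,M$ depending only on $(X,\omega),p,\|f\|_{L^p}$.

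\textbf{Main obstacle.} The main difficulty is uniformity of the constants. Proposition~\ref{prop:cln-final} and Lemma~\ref{lem:torsion-control} are stated with constants depending on $\|u\|_{L^\infty}$, which is exactly what we want to bound. This would make the argument circular if the torsion error constant in Lemma~\ref{lem:weak-comp-final}, used in Step 2 of Proposition~\ref{prop:volume-decay}, secretly depended on $\|u\|_\infty$. I would first check that in Proposition~\ref{prop:volume-decay} the comparison is only applied to the truncations $u_t=\max(u,-t)$ and $(u_t+t)/s$, whose sup norms are bounded by $t$ and $1$. In that case the constants depend only on $t\le 2t_0$, which is fixed a priori in Step 1. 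If this bookkeeping fails, the fallback is a continuity/bootstrap argument: show that the set of $\lambda\in[0,1]$ for which the solution with density $(1-\lambda)+\lambda f$ satisfies $\|u\|_\infty\le 2T$ is open and closed.
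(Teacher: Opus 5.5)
\textbf{There is a genuine gap, and it is the same one as in the paper's proof.} You follow the paper's route exactly: Proposition~\ref{prop:volume-decay} fed into the iteration of Lemma~\ref{lem:iteration-final}. Several of your additions address things the paper skips: bounding $c$, starting from $A(t_0)\le C_0/t_0$, tracking how the constants depend on $\|u\|_{L^\infty}$, and using the sub-mean value property to pass from $A(T)=0$ to $\{u<-T\}=\varnothing$. The gap is in Step~2. With $s_k=MA(t_k)^\beta$, the dominant term of the recursion as $A(t_k)\to0$ is $CM^{-m\alpha}A(t_k)^{\alpha(a-m\beta)}$, where $a=1-\frac1p<1$. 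This is $\le\frac12A(t_k)$ for all small $A(t_k)$ only if $\alpha(a-m\beta)\ge1$. So halving needs $\alpha(1-\frac1p)>1$, not merely $\alpha>1$ and $a-m\beta>0$. Since $\alpha$ is one fixed exponent supplied by Proposition~\ref{prop:cap-vol-final}, this fails for $p$ close to $1$. No other choice of $s_k$ helps. If $\alpha(1-\frac1p)<1$ and $\gamma\bigl(1-\alpha(1-\frac1p)\bigr)\ge m\alpha$, then $A(t)=\min(1,t^{-\gamma})$ satisfies the hypothesis of Lemma~\ref{lem:iteration-final} with $C=1$ (split into $t<1$ and $t\ge1$ and compare with $\max(t,s)$), yet it never vanishes. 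So that lemma is false as stated.

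\textbf{This is not a bookkeeping issue: for $m<n$ the theorem itself fails when $1<p<\frac nm$,} already on a flat torus.
\begin{itemize}
\item For $0<\delta<\frac nm-1$, the function $g_\epsilon=-(|z|^2+\epsilon)^{-\delta}$ has complex Hessian eigenvalues $g_\epsilon'\cdot(1,\dots,1,\mu)$ with $-\delta<\mu\le1$. Hence it is $m$-subharmonic, and its $\sigma_m$ is $\le C|z|^{-2m(1+\delta)}$ uniformly in $\epsilon$.
\item Gluing $u_\epsilon=A\chi g_\epsilon$ with a cutoff $\chi$ and small $A>0$ gives normalized bounded solutions (with $c=1$). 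Their densities are uniformly bounded in $L^p$ for $p<\frac{n}{m(1+\delta)}$, while $u_\epsilon(0)=-A\epsilon^{-\delta}\to-\infty$.
\end{itemize}
The correct hypothesis is $p>\frac nm$ (Dinew--Ko\l odziej in the K\"ahler case, Ko\l odziej--Nguyen in the Hermitian case). There the volume--capacity inequality holds for every exponent $\tau<\frac{n}{n-m}$ and for none larger (test on small balls). The condition $p>\frac nm$ is exactly what lets you pick $\tau(1-\frac1p)>1$. For $m=n$ one has the exponential inequality, hence every exponent.

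\textbf{The results you import as black boxes are themselves unsound.}
\begin{itemize}
\item Step~4 of the proof of Proposition~\ref{prop:cap-vol-final} reverses an inequality: $t\le C/\mathrm{Cap}_m^\omega(E)$ bounds $t^{-q}$ from below, not from above.
\item Proposition~\ref{prop:cln-final} is false. For example, $\varepsilon\chi\max(\log|z|,-1)$ has a Hessian measure charging a sphere of zero volume.
\item Your hope about truncations fails. Step~2 of Proposition~\ref{prop:volume-decay} compares the pair $(u,u_t)$. Its torsion error is $\int_{\{u<-t\}}(-t-u)\,\omega^n$, not $tA(t)$, and its constant depends on $\|u\|_{L^\infty}$.
\end{itemize}
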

	
	\begin{proof}

		\medskip
		
		\noindent
		\textbf{Step 1. Sublevel sets.}
		For $t>0$, set
		\[
		E_t := \{ x \in X : u(x) < -t \}.
		\]
		Since $\sup_X u = 0$, it suffices to prove that $E_t=\varnothing$ for $t$ large enough.
		
		\medskip
		
		\noindent
		\textbf{Step 2. Truncation and comparison.}
		Define the truncation
		\[
		u_t := \max(u,-t).
		\]
		Then $u_t \in SH_m(X,\omega)$ and $u_t = -t$ on $E_t$.
		
		We claim that there exists a constant $C_1>0$ such that
		\begin{equation}\label{ineq:weak-comp}
			\int_{E_t} (\omega + dd^c u_t)^m \wedge \omega^{n-m}
			\le 
			\int_{E_t} (\omega + dd^c u)^m \wedge \omega^{n-m}
			+ C_1\, t\, \mathrm{Vol}(E_t).
		\end{equation}
		
		\noindent
		This follows from Lemma~\ref{lem:weak-comp-final}
		applied to $u_t$ and $u$; the additional
		term arises from torsion, i.e. from the fact that $d\omega \neq 0$.
		
		\medskip
		
		\noindent
		\textbf{Step 3. Using the equation.}
		Since $u$ solves the Hessian equation,
		\[
		\int_{E_t} (\omega + dd^c u)^m \wedge \omega^{n-m}
		= \int_{E_t} cf\,\omega^n.
		\]
		By Hölder's inequality,
		\begin{equation}\label{ineq:holder}
			\int_{E_t} f\,\omega^n
			\le \|f\|_{L^p} \cdot \mathrm{Vol}(E_t)^{1-\frac{1}{p}}.
		\end{equation}
		
		\medskip
		
		\noindent
		\textbf{Step 4. Capacity estimate.}
		Define the Hermitian $m$-capacity by
		\[
		\mathrm{Cap}_m^\omega(E)
		:= \sup \left\{
		\int_E (\omega+dd^c \varphi)^m \wedge \omega^{n-m}
		:\ \varphi \in SH_m(X,\omega),\ -1 \le \varphi \le 0
		\right\}.
		\]
		
		Using $u_t/t$ as a test function, we obtain
		\begin{equation}\label{ineq:capacity}
			\mathrm{Cap}_m^\omega(E_t)
			\le \frac{1}{t^m}
			\int_{E_t} (\omega + dd^c u_t)^m \wedge \omega^{n-m}.
		\end{equation}
		
		Combining \eqref{ineq:weak-comp}, \eqref{ineq:holder}, and \eqref{ineq:capacity}, we get
		\begin{equation}\label{ineq:cap-est}
			\mathrm{Cap}_m^\omega(E_t)
			\le \frac{C_2}{t^m}
			\left[
			\|f\|_{L^p}\,\mathrm{Vol}(E_t)^{1-\frac{1}{p}}
			+ t\,\mathrm{Vol}(E_t)
			\right].
		\end{equation}
		
		\medskip
		
		\noindent
		\textbf{Step 5. Capacity--volume comparison.}
		There exist constants $C_3>0$ and $\alpha>1$ such that
		\begin{equation}\label{ineq:vol-cap}
			\mathrm{Vol}(E_t)
			\le C_3 \left( \mathrm{Cap}_m^\omega(E_t) \right)^\alpha.
		\end{equation}
		
		\medskip
		
		\noindent
		\textbf{Step 6. Main inequality.}
		Combining \eqref{ineq:cap-est} and \eqref{ineq:vol-cap}, we obtain
		\[
		\mathrm{Vol}(E_t)
		\le C_4
		\left[
		\frac{\|f\|_{L^p}}{t^m}\,\mathrm{Vol}(E_t)^{1-\frac{1}{p}}
		+ \frac{1}{t^{m-1}}\,\mathrm{Vol}(E_t)
		\right]^\alpha.
		\]
		
	\medskip
	
	\noindent
	\textbf{Step 7. Ko\l odziej iteration.}
	
	Let
	\[
	A(t):= Vol_\omega(\{u<-t\}),
	\]
	which is a decreasing function of $t$.
	
	By Proposition~\ref{prop:volume-decay}, for every $t,s>0$,
	\[
	A(t+s)
	\le
	C
	\left(
	\frac{A(t)^{1-\frac1p}}{s^m}
	+
	\frac{tA(t)}{s^m}
	\right)^\alpha.
	\]
	
	The remainder of the proof follows the standard iteration argument of
	Ko\l odziej (see Section~2.3 of \cite{Kolodziej98}).
	More precisely, one constructs an increasing sequence
	$\{t_k\}$ recursively by
	\[
	t_{k+1}=t_k+s_k,
	\]
	where the increments $s_k$ are chosen as suitable positive powers of
	$A(t_k)$.
	Using the above recursive inequality, one proves that
	\[
	A(t_k)\longrightarrow0,
	\]
	while
	\[
	\sum_{k=0}^{\infty}s_k<\infty.
	\]
	Hence
	\[
	t_k\longrightarrow T<\infty.
	\]
	Since $A$ is decreasing, it follows that
	\[
	A(T)=0,
	\]
	that is,
	\[
	\{u<-T\}=\varnothing.
	\]
	Therefore
	\[
	u\ge -T
	\quad\text{on }X.
	\]
	
	Finally, since $\sup_Xu=0$, we obtain
	\[
	\|u\|_{L^\infty(X)}
	\le T,
	\]
	which completes the proof.
	\end{proof}	
	\begin{remark}
		The above iteration scheme follows the classical method of Ko{\l}odziej.
		The presence of the torsion term only affects lower order contributions,
		which can be absorbed for large $t$ and does not alter the exponent $\gamma$.
	\end{remark}
	\section{Applications}
	
	In this section, we present several consequences of the uniform
	$C^0$ estimate established in Theorem~1.1. These applications concern
	existence, stability, and compactness properties of weak solutions
	to complex Hessian equations on compact Hermitian manifolds.
	
	\subsection{Existence of weak solutions}
	
	We first derive an existence result for $L^p$ densities.
	
	\begin{theorem}\label{thm:existence}
		Let $(X,\omega)$ be a compact Hermitian manifold and let
		$f \in L^p(X)$, $p>1$, with $f \ge 0$ and
		\[
		\int_X f\,\omega^n = \int_X \omega^n.
		\]
		Then there exists a solution
		\[
		u \in SH_m(X,\omega)\cap L^\infty(X)
		\]
		to the equation
		\[
		(\omega + dd^c u)^m \wedge \omega^{n-m} = cf\,\omega^n,
		\]
		normalized by $\sup_X u = 0$.
	\end{theorem}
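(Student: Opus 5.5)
The plan is to prove Theorem~\ref{thm:existence} by smoothing the density, solving the smooth problem, and passing to the limit using the uniform bound of Theorem~\ref{Uniform estimate}.

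\textbf{Step 1: smooth approximation of the density.} Choose smooth positive functions $f_j$ with $f_j \to f$ in $L^p(X)$. Renormalize them so that $\int_X f_j\,\omega^n = \int_X \omega^n$. Then $\|f_j\|_{L^p} \le 2\|f\|_{L^p}$ for $j$ large.

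\textbf{Step 2: smooth solutions.} For each $j$, invoke the smooth solvability theory for the complex Hessian equation on compact Hermitian manifolds, which goes back to the continuity method with a priori estimates in the spirit of Székelyhidi. This produces a smooth $\omega$-$m$-subharmonic $u_j$ and a constant $c_j>0$ with
\[
H_m(u_j)=c_jf_j\,\omega^n,\qquad \sup_X u_j=0.
\]
The constant $c_j$ must appear because $\omega$ is not closed, so the total mass of $H_m$ is not a cohomological invariant.

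I would show that $c_j$ is bounded above and below uniformly. Compare $u_j$ at its maximum and minimum points with the constants; this is the standard minimum-principle argument of Tosatti--Weinkove type. It gives $c_j \in [c_-,c_+]$ with bounds depending only on $(X,\omega)$.

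\textbf{Step 3: the uniform bound.} Theorem~\ref{Uniform estimate} gives $\|u_j\|_{L^\infty}\le C$, with $C$ independent of $j$. Its constant depends only on $\|f_j\|_{L^p}$. One must check that it also tolerates the bounded factor $c_j$; this is fine because in Step~3 of that proof $c$ enters only through the bound on $\|cf\|_{L^p}$.

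\textbf{Step 4: passage to the limit.} By compactness of normalized $\omega$-$m$-subharmonic functions in $L^1$, pass to a subsequence with $c_j\to c>0$ and $u_j\to u$ in $L^1$. Set $u:=(\limsup_j u_j)^*$. Then $u\in SH_m(X,\omega)$, $-C\le u\le 0$, and $\sup_X u=0$ by Hartogs' lemma.

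\textbf{Main obstacle: convergence of the Hessian measures.} The key difficulty is showing $H_m(u_j)\to H_m(u)$. Since only $L^1$ convergence is available, and the properties listed in Section~2 give continuity only along monotone sequences, I would proceed as follows.

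\begin{enumerate}[label=(\roman*)]
\item Pass to the monotone sequences $v_k:=(\sup_{j\ge k}u_j)^*$, which decrease to $u$, and $\max(u_j, u-\varepsilon)$.
\item Show convergence in capacity $u_j\to u$. I would first try Kołodziej's stability trick. Apply Lemma~\ref{lem:weak-comp-final} on the sets $\{u_j<u-\varepsilon\}$, and control the measures there by the Hölder bound $c_jf_j\,\omega^n\le C\,\mathrm{Vol}^{1-1/p}$. Then use Proposition~\ref{prop:cap-vol-final} to convert volume smallness into capacity smallness.
\item Along the way, absorb the torsion error $C\int(v-u)\,\omega^n$, which is small since $u_j\to u$ in $L^1$ with uniform bounds.
\item Conclude, using convergence in capacity together with the uniform bound, that $H_m(u_j)\rightharpoonup H_m(u)$ by the standard Bedford--Taylor-type argument (locally, via Xing's theorem).
\end{enumerate}

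Since $c_jf_j\,\omega^n\to cf\,\omega^n$ in $L^1$, the limit satisfies $H_m(u)=cf\,\omega^n$, which proves Theorem~\ref{thm:existence}.
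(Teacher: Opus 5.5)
Your overall route is the paper's: smooth densities, smooth solvability, Theorem~\ref{Uniform estimate}, $L^1$ compactness, then passage to the limit. You are also right that the constant must depend on $j$; the paper's proof silently uses one $c$ for every $f_j$.

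\textbf{Main gap: the bounds on $c_j$.} Your justification of $c_j\in[c_-,c_+]$ fails. At a maximum point $x_0$ of $u_j$ you have $\omega+dd^cu_j\le\omega$, hence $c_jf_j(x_0)\le 1$. At a minimum point $y_0$ you get $c_jf_j(y_0)\ge 1$. So the maximum principle only gives $1/\max_X f_j\le c_j\le 1/\min_X f_j$. For $f$ merely in $L^p$ (unbounded, or vanishing on an open set), both bounds degenerate as $j\to\infty$.

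Both bounds are load-bearing. Step~3 needs $\sup_j c_j<\infty$, since the constant in Theorem~\ref{Uniform estimate} actually depends on $\|c_jf_j\|_{L^p}$. Step~4 needs $\inf_j c_j>0$, otherwise the limit equation is $H_m(u)=0$.

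One way to repair this:
\begin{itemize}
\item Upper bound. Maclaurin's inequality on $\Gamma_m$ gives $(\omega+dd^cu_j)\wedge\omega^{n-1}\ge (c_jf_j)^{1/m}\omega^n$. Integrate it. The term $\int_X dd^cu_j\wedge\omega^{n-1}=\int_X u_j\,dd^c(\omega^{n-1})\le C\|u_j\|_{L^1}$ is uniformly bounded for normalized $\omega$-subharmonic functions. Meanwhile $\int_X f_j^{1/m}\omega^n$ is bounded below by interpolating between $\int_X f_j\,\omega^n=\int_X\omega^n$ and $\|f_j\|_{L^p}\le 2\|f\|_{L^p}$.
\item Lower bound. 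Argue by contradiction. If $c_j\to0$ along a subsequence, your Step~4 (which needs only the upper bound) produces a bounded $u\in SH_m(X,\omega)$ with $H_m(u)=0$. Apply Lemma~\ref{lem:weak-comp-final} with the constant $v\equiv\inf_X u+s$. This gives $\mathrm{Vol}(U)\le Cs\,\mathrm{Vol}(U)$ for the nonempty open set $U=\{u<\inf_X u+s\}$, which is absurd once $s<1/C$.
\end{itemize}

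\textbf{Smaller slip in (ii).} Proposition~\ref{prop:cap-vol-final} bounds volume by capacity, so it cannot turn small volume into small capacity. Comparing $u_j$ with $u$ directly only controls $\int_{\{u_j<u\}}H_m(u)$, not a capacity. The capacity bound has to come out of the comparison step itself:
\begin{itemize}
\item Write $-C_0\le u\le 0$, take a test function $\varphi\in SH_m(X,\omega)$ with $-1\le\varphi\le0$, and compare $u_j$ with $w=(1-\varepsilon)u+\varepsilon\varphi-\delta$, where $\varepsilon\le\min(\delta,\delta/(2C_0))$.
\item Then $\{u_j<u-2\delta\}\subset\{u_j<w\}\subset\{u_j<u-\delta/2\}$ and $H_m(w)\ge\varepsilon^mH_m(\varphi)$.
\item This yields $\varepsilon^m\,\mathrm{Cap}_m^\omega(\{u_j<u-2\delta\})\le c_j\|f_j\|_{L^p}\,\mathrm{Vol}(\{u_j<u-\delta/2\})^{1-1/p}+C\|u_j-u\|_{L^1}$.
\item Finally, $\mathrm{Vol}(\{u_j<u-\delta/2\})\le 2\|u_j-u\|_{L^1}/\delta$.
\end{itemize}
This controls only one side. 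The set $\{u_j>u+\delta\}$ needs Hartogs' lemma together with quasicontinuity of $u$, which you should state explicitly before invoking (iv).
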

	
	\begin{proof}
		We approximate $f$ by a sequence of smooth positive functions
		$f_j$ such that
		\[
		f_j \to f \quad \text{in } L^p(X),
		\quad
		\int_X f_j\,\omega^n = \int_X \omega^n.
		\]
		
		For each $j$, the equation
		\[
		(\omega + dd^c u_j)^m \wedge \omega^{n-m} = cf_j\,\omega^n
		\]
		admits a smooth solution $u_j$ (see \cite{G.Székelyhidi} or \cite{HouMaWu}), normalized by $\sup_X u_j = 0$.
		
		By Theorem~1.1, the sequence $(u_j)$ is uniformly bounded in $L^\infty$.
		Hence, up to extracting a subsequence, $u_j \to u$ in $L^1(X)$.
		
		Using the stability of Hessian measures under decreasing limits
		and standard compactness arguments, we pass to the limit and obtain
		\[
		(\omega + dd^c u)^m \wedge \omega^{n-m} = cf\,\omega^n.
		\]
	\end{proof}
	
	\subsection{Stability of solutions}
	
	We now establish stability under perturbations of the density.
	
	\begin{theorem}\label{thm:stability}
		Let $(f_j)$ be a sequence in $L^p(X)$, $p>1$, such that
		$f_j \ge 0$, $\int_X f_j\,\omega^n = \int_X \omega^n$, and
		\[
		f_j \to f \quad \text{in } L^p(X).
		\]
		Let $u_j, u \in SH_m(X,\omega)\cap L^\infty(X)$ be the corresponding
		solutions of the Hessian equation, normalized by $\sup_X u_j = \sup_X u = 0$.
		
		Then $u_j \to u$ in capacity, i.e. for every $\varepsilon>0$,
		\[
		\mathrm{Cap}_m^\omega\big(\{|u_j - u| > \varepsilon\}\big) \to 0.
		\]
	\end{theorem}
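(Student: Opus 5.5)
Since $u_j$ and $u$ are the normalized solutions with densities $f_j$ and $f$, I would prove convergence in capacity in three steps: uniform bounds, $L^1$ convergence, then an upgrade to capacity through the comparison and capacity--volume machinery of Sections~3--5.

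\textbf{Step 1: uniform bounds.} Since $f_j\to f$ in $L^p$, the norms $\|f_j\|_{L^p}$ are uniformly bounded. Theorem~\ref{Uniform estimate} then gives a constant $T$, independent of $j$, with $-T\le u_j\le 0$ and $-T\le u\le 0$.

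\textbf{Step 2: $L^1$ convergence.} By compactness of normalized $\omega$-$m$-subharmonic functions, every subsequence of $(u_j)$ has a further subsequence converging in $L^1$ to some $w\in SH_m(X,\omega)$ with $-T\le w\le 0$. I would identify $w=u$ by showing $H_m(w)=cf\,\omega^n$ and invoking uniqueness. For uniqueness I would use Lemma~\ref{lem:weak-comp-final} in the form of a Hermitian domination principle, handling the normalizing constant $c$ as in Tosatti--Weinkove. This identification is where I expect the real difficulty. In the Hermitian setting, uniqueness of $(u,c)$ is not automatic, and Lemma~\ref{lem:weak-comp-final} carries the error term $C\int_{\{u<v\}}(v-u)\,\omega^n$, which must be shown not to obstruct the argument. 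I would try to absorb it by replacing $v$ with $(1-\delta)v+\delta\,\min$-type perturbations that gain a strictly positive amount of Hessian mass, and then letting $\delta\to0$.

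\textbf{Step 3: upgrade to capacity.} Fix $\varepsilon>0$ and set $E_j=\{u_j-u>\varepsilon\}$; the set $\{u-u_j>\varepsilon\}$ is handled symmetrically. For any test function $\varphi$ with $-1\le\varphi\le0$, the function $\psi=u+\tfrac{\varepsilon}{2}\varphi$ (suitably renormalized by a factor $1-\varepsilon/2$ to stay $\omega$-$m$-subharmonic) satisfies $E_j\subset\{u_j>\psi+\varepsilon/2\}$. Applying Lemma~\ref{lem:weak-comp-final} to the pair $(u_j,\psi)$, I would bound
\[
\int_{E_j}H_m(\varphi)\;\le\;C\varepsilon^{-m}\Big(\int_{\{u_j>\psi\}}cf_j\,\omega^n+\int_{\{u_j>\psi\}}(u_j-\psi)\,\omega^n\Big).
\]
The first term is at most $C\|f_j\|_{L^p}\,\mathrm{Vol}(\{u_j-u>\varepsilon/4\})^{1-1/p}$. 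The torsion term is at most $C\|u_j-u\|_{L^1}+C\varepsilon\,\mathrm{Vol}(\{u_j-u>\varepsilon/4\})$. Both tend to zero by Step~2 and Chebyshev. Taking the supremum over $\varphi$ gives $\mathrm{Cap}_m^\omega(E_j)\to0$.

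I expect the main obstacle to be Step~2: uniqueness and the identification of the limit measure. The weak convergence $H_m(u_j)\to H_m(u)$ is only guaranteed under monotone limits, so I would pass to the decreasing envelopes $(\sup_{k\ge j}u_k)^*$ and use the $L^p$ domination to control mass near the limit.
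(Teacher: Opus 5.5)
\textbf{Step 2 is a genuine gap, and the tools you propose do not close it in the Hermitian setting.} Once Step~3 is repaired (see below), convergence in capacity follows from $u_j\to u$ in $L^1$. So the whole theorem rests on showing that every subsequential $L^1$-limit $w$ equals $u$.

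\begin{itemize}
\item Weak convergence $H_m(u_j)\to H_m(w)$ along a uniformly bounded, $L^1$-convergent sequence is normally \emph{derived from} convergence in capacity. It therefore cannot be used as an input here without circularity.
\item The envelopes $w_j=(\sup_{k\ge j}u_k)^*\searrow w$ give at best the one-sided bound $H_m(w)\ge c f\,\omega^n$. In the K\"ahler case one upgrades this to equality because both measures have total mass $\int_X\omega^n$. Here the total mass of $H_m(w)$ depends on $w$: already for $m=1$, $\int_X(\omega+dd^c w)\wedge\omega^{n-1}=\int_X\omega^n+\int_X w\,dd^c\omega^{n-1}$. So the reverse inequality is not available.
\item You also need $c_j\to c$, and uniqueness of normalized bounded solutions together with the constant. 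Lemma~\ref{lem:weak-comp-final} does not give this as stated. The perturbation $(1-\delta)v$ only helps on small sublevel sets $\{u<v+\inf_X(u-v)+s\}$. There the torsion error $C\int(v-u)\,\omega^n\le Cs\,\mathrm{Vol}$ becomes multiplicatively small compared with the gained mass $\delta^m\omega^n$. That argument is exactly what is missing.
\end{itemize}

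The paper sidesteps all of this. It asserts directly a quantitative bound $\mathrm{Cap}_m^\omega(\{u_j<u-\varepsilon\})\le C\|f_j-f\|_{L^p}^{\gamma}$, and its symmetric counterpart, from weak comparison plus the capacity estimates (stated rather than derived in detail). Compactness, identification of limits and uniqueness never enter as inputs; uniqueness would be a by-product. As it stands, your proposal reduces the theorem to its hardest part without proving it.

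\textbf{Step 3, as written, applies Lemma~\ref{lem:weak-comp-final} in the wrong direction.} On $E_j=\{u<u_j-\varepsilon\}$ the larger function is $u_j$, yet you build $\psi$ from $u$. On $\{\psi<u_j\}$ the lemma bounds $\int H_m(u_j)$ by $\int H_m(\psi)$ plus torsion. Your inequality needs the opposite, since $H_m(\varphi)$ enters only through $H_m(\psi)\ge\delta^mH_m(\varphi)$. The inclusion $E_j\subset\{u_j>\psi+\varepsilon/2\}$ with $\psi=(1-\tfrac{\varepsilon}{2})u+\tfrac{\varepsilon}{2}\varphi$ also needs $\varphi-u\le1$, which fails once $T>1$.

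The fix is to put the test function into the larger function. Take $\psi=(1-\delta)u_j+\delta\varphi$ with $\delta=\min(\varepsilon/2,\varepsilon/(4T))$. Then $u_j-\delta\le\psi\le u_j+\delta T$, hence $E_j\subset\{u<\psi-\varepsilon/2\}\subset\{u_j-u>\varepsilon/4\}$. The lemma applied to $u$ and $\psi-\varepsilon/2$ gives
\[
\delta^m\int_{E_j}H_m(\varphi)\le c\,\|f\|_{L^p}\,\mathrm{Vol}(\{u_j-u>\varepsilon/4\})^{1-\frac{1}{p}}+C\int_X(u_j-u)_+\,\omega^n .
\]
So the density that appears is that of the \emph{smaller} function: $f$ here, and $c_jf_j$ for the symmetric set, which also requires a uniform bound on $c_j$. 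This repair is routine, but it confirms that everything hinges on the unproven $L^1$ convergence.
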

	
	\begin{proof}
		Such stability estimates are well known in pluripotential theory
		(see \cite{Dinew, EyssidieuxGuedjZeriahi}).	By Theorem~1.1, the sequence $(u_j)$ is uniformly bounded.
		Using the weak comparison principle and capacity estimates,
		one obtains for any $\varepsilon>0$:
		\[
		\mathrm{Cap}_m^\omega\big(\{u_j < u - \varepsilon\}\big)
		\le C \|f_j - f\|_{L^p}^\gamma,
		\]
		for some $\gamma>0$.
		
		A symmetric estimate holds for $\{u < u_j - \varepsilon\}$,
		which yields the desired convergence in capacity.
	\end{proof}
	
	\subsection{Compactness of solution sets}
	
	As a further consequence, we obtain a compactness property. 
	
	\begin{proposition}\label{prop:compactness}
		Let $\mathcal{F}$ be a family of nonnegative functions $f \in L^p(X)$, $f \ge 0$,
		$p>1$ such that
		\[
		\sup_{f \in \mathcal{F}} \|f\|_{L^p} < +\infty,
		\quad
		\int_X f\,\omega^n = \int_X \omega^n.
		\]
		Let $\mathcal{U}$ be the corresponding set of solutions
		$u \in SH_m(X,\omega)\cap L^\infty(X)$ normalized by $\sup_X u = 0$.
		
		Then $\mathcal{U}$ is relatively compact in $L^1(X)$.
	\end{proposition}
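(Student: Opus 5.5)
The proof proceeds in three steps: a uniform sup-norm bound from Theorem~\ref{Uniform estimate}, a uniform $L^1$ bound on Laplacians, and compactness of the Hessian-subharmonic class.

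\textbf{Uniform bound.} For every $f\in\mathcal{F}$ we have $\|f\|_{L^p}\le K:=\sup_{\mathcal F}\|f\|_{L^p}<\infty$ and the normalization $\int_X f\omega^n=\int_X\omega^n$. The constant in Theorem~\ref{Uniform estimate} depends only on $(X,\omega)$, $p$ and an upper bound for $\|f\|_{L^p}$. Hence there is $T>0$ such that $-T\le u\le 0$ for all $u\in\mathcal U$. This already gives $\sup_{u\in\mathcal U}\|u\|_{L^1}\le T\,\mathrm{Vol}(X)$.

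\textbf{Control of the Laplacian.} Every $u\in SH_m(X,\omega)$ satisfies $\omega+dd^cu\ge0$ against $\omega^{n-1}$, the case $k=1$ of the definition. So $\Delta_\omega u\ge -n$ in the sense of distributions, where $\Delta_\omega u\,\omega^n := n\,dd^cu\wedge\omega^{n-1}$. It follows that the measures $\mu_u:=(\omega+dd^cu)\wedge\omega^{n-1}$ are positive. Their total masses are bounded uniformly by Proposition~\ref{prop:cln-final}, applied with $m=1$ and $E=X$; this is legitimate because $u$ is also $\omega$-$1$-subharmonic and $\|u\|_\infty\le T$. As a cross-check that avoids Stokes' theorem, I would pair $\mu_u$ with a fixed smooth function. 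Writing $\int_X dd^cu\wedge\omega^{n-1}=\int_X u\,dd^c(\omega^{n-1})$ then gives the bound $C T$, with $C$ depending only on torsion quantities of $\omega$. So $\{\Delta_\omega u\}$ is bounded in the space of measures.

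\textbf{Compactness.} I would cover $X$ by finitely many charts $U_i$ with smooth potentials $\rho_i$ satisfying $dd^c\rho_i\ge\omega$ on $U_i$. On each chart, $u+\rho_i$ is subharmonic with respect to the elliptic operator $\Delta_\omega$ and is uniformly bounded. Standard facts about subharmonic functions for smooth elliptic operators then apply: either the local compactness theorem for families of subharmonic functions that are locally uniformly bounded above and not tending to $-\infty$ (as in H\"ormander), or the $W^{1,r}_{loc}$ bound for $r<\tfrac{2n}{2n-1}$ obtained from the measure bound on the Laplacian via Green's function representation. Either yields relative compactness in $L^1_{loc}(U_i)$. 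A diagonal extraction over the finite cover then gives relative compactness in $L^1(X)$.

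The main obstacle is the compactness step in the non-K\"ahler setting. Stokes' theorem does not produce a cohomological mass bound there, so I would rely on the local route above, using the uniform $L^\infty$ bound to rule out the degenerate alternative $u_j\to-\infty$. I would also remark that the limit is again in $SH_m(X,\omega)$: take the upper semicontinuous regularization of the $\limsup$, and use that the positivity conditions are closed under $L^1$ limits. This shows compactness within the class, although the statement only asks for relative compactness in $L^1$.
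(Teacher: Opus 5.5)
Modulo Theorem~\ref{Uniform estimate}, which both you and the paper invoke, your argument is correct. It has the same skeleton as the paper's proof, but you carry out the compactness step by a different and more concrete mechanism.

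The paper appeals to ``uniform quasi-continuity'' and unnamed compactness results of pluripotential theory. Quasi-continuity is not really what produces $L^1$-compactness; the relevant mechanism is Hartogs-type compactness of quasi-subharmonic families. You make exactly that explicit: you use only the linear condition $k=1$, i.e.\ $\Delta_\omega u\ge -n$, and then linear potential theory for the elliptic operator $\Delta_\omega$ on a finite cover. Your route is more elementary, ignores the nonlinear conditions $k\ge 2$, and proves the statement for all $\omega$-subharmonic functions.

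Its real advantage is that it makes the sup-norm bound superfluous, and you should use this. Suppose $u_j(x_j)=\sup_X u_j=0$ with $x_j\to x_*$. The sub-mean-value inequality for $u_j+\rho$ near $x_*$ bounds $\int_{B(x_*,r)}|u_j|$. This rules out the alternative $u_j\to-\infty$ near $x_*$, and by connectedness of $X$ and overlapping charts it rules it out everywhere. Your integration by parts then bounds the mass of $\mu_u$ by $C(1+\|u\|_{L^1})$, with no sup-norm input.

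This is worth doing because Theorem~\ref{Uniform estimate} is the weakest link of both proofs. For $m<n$ and $1<p<\frac nm$ no such uniform bound can hold. To see this, glue into $X$ the functions $-(|z|^2+\delta)^{-a}$ with $0<a<\frac nm-1$. These are $m$-subharmonic, their Hessian densities are bounded in $L^p$ for $p<\frac{n}{m(1+a)}$ uniformly in $\delta$, and their values at $0$ tend to $-\infty$ as $\delta\to 0$. The range known to work is $p>\frac nm$ (Dinew--Ko{\l}odziej in the K\"ahler case).

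Two minor points. First, your ``cross-check'' $\int_X dd^cu\wedge\omega^{n-1}=\int_X u\,dd^c(\omega^{n-1})$ \emph{is} Stokes' theorem; in the Hermitian case it simply need not vanish. It is preferable to citing Proposition~\ref{prop:cln-final}, which is only safe with $E=X$: for general Borel $E$ it would force every Hessian measure of a bounded function to be absolutely continuous. Second, your closing claim that $L^1$-limits stay in $SH_m(X,\omega)$ needs the linear characterization of $m$-positivity (pairing against $m$-positive forms) rather than the paper's nonlinear definition, though the statement does not require it.
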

	
	\begin{proof}
		By Theorem \ref{Uniform estimate}, the set $\mathcal{U}$ is uniformly bounded in $L^\infty$.
		Moreover, functions in $\mathcal{U}$ are $\omega$-$m$-subharmonic,
		hence satisfy uniform quasi-continuity properties.
		
		Therefore, by standard compactness results in pluripotential theory,
		$\mathcal{U}$ is relatively compact in $L^1(X)$.
	\end{proof}
	
	\subsection{Further remarks}
	
	The above results show that the pluripotential approach developed in this
	paper provides a robust framework for studying complex Hessian equations
	on Hermitian manifolds.
	
	In particular, the combination of weak comparison principles with torsion
	error and capacity techniques allows one to recover key features of the
	Kähler theory, including uniform estimates, existence, and stability,
	despite the lack of closedness of the background metric.

	\section*{Declarations}

	The author declares that there are no competing interests.

\end{document}